\def\s{\mathbb{S}}
\def\R{\mathbb{R}}
\def\L{\mathbb{L}}
\def\E{\mathbb{E}}
\def\H{\mathbb{H}}
\def\K{\mathcal{K}}
\newcommand{\arcsinh}{\mathop{\rm arcsinh }\nolimits}
\DeclareMathOperator{\arccosh}{arccosh}
\newtheorem{theorem}{Theorem}[section]
\newtheorem{proposition}[theorem]{Proposition}
\newtheorem{corollary}[theorem]{Corollary}
\theoremstyle{definition}
\newtheorem{definition}[theorem]{Definition}
\newtheorem{remark}[theorem]{Remark}
\newtheorem{example}[theorem]{Example}
\numberwithin{equation}{section}
\begin{document}

\title[Rotational Weingarten surfaces in Lorentz-Minkowski space]{Rotational Weingarten surfaces \\ in Lorentz-Minkowski space}

\author[P. Carretero]{Paula Carretero}
\address{Departamento de Matem\'aticas \\
	Universidad de Ja\'{e}n \\
	23071 Ja\'{e}n, Spain.}
\email{pch00005@red.ujaen.es}

\author[I. Castro]{Ildefonso Castro}
\address{Departamento de Matem\'{a}ticas \\
	Universidad de Ja\'{e}n \\
	23071 Ja\'{e}n, Spain and IMAG, Instituto de Matem\'aticas de la Universidad de Granada, 18071 Granada, Spain.}
\email{icastro@ujaen.es}

\author[I. Castro-Infantes]{Ildefonso Castro-Infantes}
\address{Departamento de Matem\'{a}ticas  \\
	Universidad de Murcia \\
	30100 Murcia, Spain. } 
\email{ildefonso.castro@um.es}

%\thanks{The second author is partially supported by State Research Agency and European
%	Regional Development Fund via the grants no. PID 2020-117868GB-I00 and PID 2022-142559NB-I00, and Maria de Maeztu Excellence Unit IMAG CEX2020-001105-M funded by MCIN/AEI/10.13039/ 501100011033/
%	The third author is partially supported by the grant PID2021-124157NB-I00 funded by MCIN/AEI/10.13039/501100011033/ ‘ERDF A way of making Europe’, Spain; and by Comunidad Aut\'{o}noma de la Regi\'{o}n de Murcia, Spain, within the framework of the Regional Programme in Promotion of the Scientific and Technical Research (Action Plan 2022), by Fundaci\'{o}n S\'{e}neca, Regional Agency of Science and Technology, REF, 21899/PI/22.
%}

\subjclass[2010]{Primary 53B30, 53A04, 53A05}

\keywords{Rotational surfaces, Weingarten surfaces, Lorentz-Minkowski space.}

\date{}

\begin{abstract}
	We propose a new approach to the study of rotational surfaces in Lorentz-Minkowski space based on the notion of the geometric linear momentum of the generatrix curves with respect to the axes of revolution. This technique allows us to reduce any Weingarten condition on the surface to a first-order ordinary differential equation for the momentum as a function of the distance to the corresponding axis, providing a unified framework that encompasses the three causal types of rotation axes. As a direct application, we classify important families of rotational Weingarten surfaces in this setting, including some linear and quadratic cases. Furthermore, we introduce the non-degenerate quadric surfaces of revolution in Lorentz-Minkowski space and characterize them in terms of a specific cubic Weingarten relation.
\end{abstract}

\maketitle

%\tableofcontents

%\newpage

\section{Introduction}\label{SectIntro}

The study of surfaces defined by properties of their curvature is a central theme in classical differential geometry. Among these, \textit{Weingarten surfaces} ---those satisfying a non-trivial functional relation $W(\kappa_1, \kappa_2)=0$ between their principal curvatures--- have played a distinguished role since the seminal works of Weingarten \cite{W61}, Chern \cite{Ch45} and Hopf \cite{H51}. While the classification of such surfaces in the general case remains an open problem, \textit{rotational surfaces} in 3-manifolds serve as a fundamental testing ground (cf.\ \cite{KS05}). Due to their symmetry, they reduce the complexity of the non-linear partial differential equations governing curvature problems to manageable ordinary differential equations, allowing for the discovery of explicit examples and complete classifications.

In the context of the Lorentz-Minkowski space $\mathbb{L}^3$, the geometry of rotational surfaces presents richer and more distinct phenomena than in the Euclidean setting, primarily due to the causal character of the axis of revolution (which may be spacelike, timelike, or lightlike) and the metric signature. We  recall (see \cite{HN83}) the relevant role that rotational surfaces in $\L^3$ have played in the global theory
of surfaces in $\L^3$. However, the analytical difficulties in obtaining classification results on Weingarten surfaces even exceed those encountered in Euclidean space  (see e.g.\ \cite{AG03}, \cite{BL11}, \cite{BLS11}, \cite{CL97}, \cite{dS21}, \cite{Du10}, \cite{HN84}, \cite{LV06}, \cite{LV07}, \cite{YZ21}). 

To overcome these challenges, we adopt a novel geometric approach recently developed for the Euclidean case in \cite{CC22} and \cite{CC24} and for the spherical case in \cite{CCIs24}. This method relies on the key analytical tool for our study: the \textit{geometric linear momentum} of the Lorentzian or Euclidean generatrix curve with respect to the axis of revolution (see Section \ref{SectMomentum}). 
Geometrically, it controls the angle of the Frenet frame of the curve with the coordinate axes and, 
in physical terms, it may be described as the linear momentum (with respect to the rotation axis) of a particle of unit mass with unit speed and trajectory the corresponding curve. In addition, it can be interpreted as an antiderivative (up to sign) of the curvature of the curve, when this is expressed as a function of the (pseudo)distance to a fixed line. The most noteworthy fact (see Theorem \ref{Th:Kdetermine} and Corollary \ref{cor:Param}) is that the  geometric linear momentum determines the curve ---and therefore the rotational surface--- explicitly by quadratures (modulo  translations along the axis of revolution).

As demonstrated in previous works in the Euclidean setting (see e.g.\ \cite{CC22}, \cite{CC23}, \cite{CC24}, \cite{CCC25}), rewriting the curvature conditions in terms of this momentum transforms the problem into a first-order differential equation, providing a unified and powerful framework for analysis. Therefore
the aim of this paper is to apply this momentum-based technique to classify rotational Weingarten surfaces in $\mathbb{L}^3$, that is, those satisfying $W(k_{\text m}, k_{\text p})=0$, where $k_{\text m}$ and $k_{\text p}$ stand for the principal curvatures along meridians and parallels, respectively. Along  these lines, our key result  (see both Theorem \ref{Th:kmkp} and Corollary \ref{cor:edoW}) reformulates any Weingarten relation on any type  of rotational surface as a first-order differential equation  for the geometric linear momentum   of its generatrix with respect to the axis of revolution as a function of  the (pseudo)distance  from the curve to the axis.

As a first direct application, we provide (see Theorem \ref{Th:H=0}) a considerably shorter proof of the classification of the rotational surfaces with zero mean curvature in $\L^3$, obtained by Kobayashi \cite{Ko83} in the spacelike case and by Van de Woestijne \cite{VW90} in the timelike case. 
Furthermore, drawing inspiration from \cite{CC24}, we introduce (Definition \ref{def:Hopf}) a family of rotational surfaces in $\mathbb{L}^3$ that acts as the Lorentzian analogue of the classical Hopf surfaces in $\mathbb{E}^3$ \cite{H51}. This leads to Theorem \ref{Th:Hopf LW}, which constitutes a significant generalization of the preceding result; specifically, we characterize Lorentzian Hopf surfaces as the only rotational surfaces (other than planes) in $\mathbb{L}^3$ fulfilling the linear Weingarten relation $k_{\text m} = q\, k_{\text p}$,  $q \neq 0$ (Theorem \ref{Th:H=0} being the specific instance where $q=-1$). This approach offers a description distinct from that found in Theorem 1.3 of \cite{BL11} for spacelike surfaces and completes the classification for the timelike case.

We also address certain quadratic Weingarten condition (see Theorem \ref{Th:Wquadratic}), providing a complete description of the family satisfying  $k_{\rm m}=\mu \, k_{\rm p}^2$, $\mu \neq 0$, highlighting the interplay between the causal character of the rotation axis and the algebraic structure of the Weingarten relation.

 Finally, to the best of our knowledge, there is a scarcity of literature concerning non-degenerate quadric surfaces of revolution in $\mathbb L^3$. In Section \ref{SectQuadric}, we identify the Lorentzian analogues of the non-degenerate quadric surfaces of revolution in $\mathbb E^3$. Subsequently, we investigate the cubic Weingarten relation $k_{\text m}=\mu\, k_{\text p}^3$, $\mu \neq 0$, and prove in Theorem \ref{Th:W-cubic} that this condition characterizes the non-degenerate quadric surfaces of revolution in $\mathbb{L}^3$. This result provides a Lorentzian counterpart to the findings recently obtained in Euclidean space in \cite{CC22} and \cite{CC23}.

\section{Preliminaries}\label{SectPreliminaries}

The Lorentz-Minkowski space $\L^3$ is defined as the vector space $\R^3$ endowed
with the Lorentzian metric $\langle \cdot , \cdot \rangle \!=\!dx_1^2+dx_2^2-dx_3^2$, where $(x_1, x_2, x_3)$
are canonical coordinates in $\R^3$. 
For any $u,v\in \L^3$, the Lorentzian cross-product $u \times v$ is defined in such a way that the relation $\langle u \times v, w \rangle = \det (u,v,w)$ holds for all $w\in \L^3$. We may introduce the concept of causal character as follows: A vector $w\in \L^3$ is said to be \textit{spacelike} if $\langle w, w \rangle >0$ or $w=0$, \textit{timelike} if $\langle w, w \rangle <0$ and \textit{lightlike} (or \textit{null}) if $\langle w, w \rangle =0$ and $w\neq 0$. The \textit{light-cone} $\mathcal C$ is the set of all lightlike vectors of $\L^3$.
Given a vector subspace $U\subset \R^3$, $U $ is called \textit{spacelike} if the induced metric by $\langle \cdot , \cdot \rangle $ on $U$ is positive definite, \textit{timelike} if it has index one, and \textit{lightlike} if it is degenerate.

\subsection{Surfaces in $\L^3$}\label{SectSurfaces}

A non-degenerate surface $\Sigma $ in $\L^3$ is called spacelike (resp.\ timelike) if the induced metric, also denoted by $\langle \cdot , \cdot \rangle$, is positive definite (resp.\  is a metric with index 1). Equivalently, $\Sigma $ is spacelike (resp.\ timelike) if all tangent planes to $\Sigma $ endowed with the induced metric are spacelike (resp.\ timelike). 

The \textit{de Sitter 2-space} of radius $R>0$ is the timelike surface given by $\s^2_1(R)=\{  x \in \L^3 \, | \, \langle x,x \rangle =R^2 \}$ and the\textit{ hyperbolic plane} of radius $R>0$ is the spacelike  surface  given by $\H^2_+(R)=\{  x \in \L^3 \, | \, \langle x,x \rangle =-R^2, \ x_3 >0 \}$. They are both totally umbilical surfaces of $\L^3$ with constant Gauss curvature $1/R^2$ and $-1/R^2$, respectively. See Figure \ref{fig:LorentzSpheres}.

\begin{figure}[h]
	\begin{center}
		\includegraphics[height=5cm]{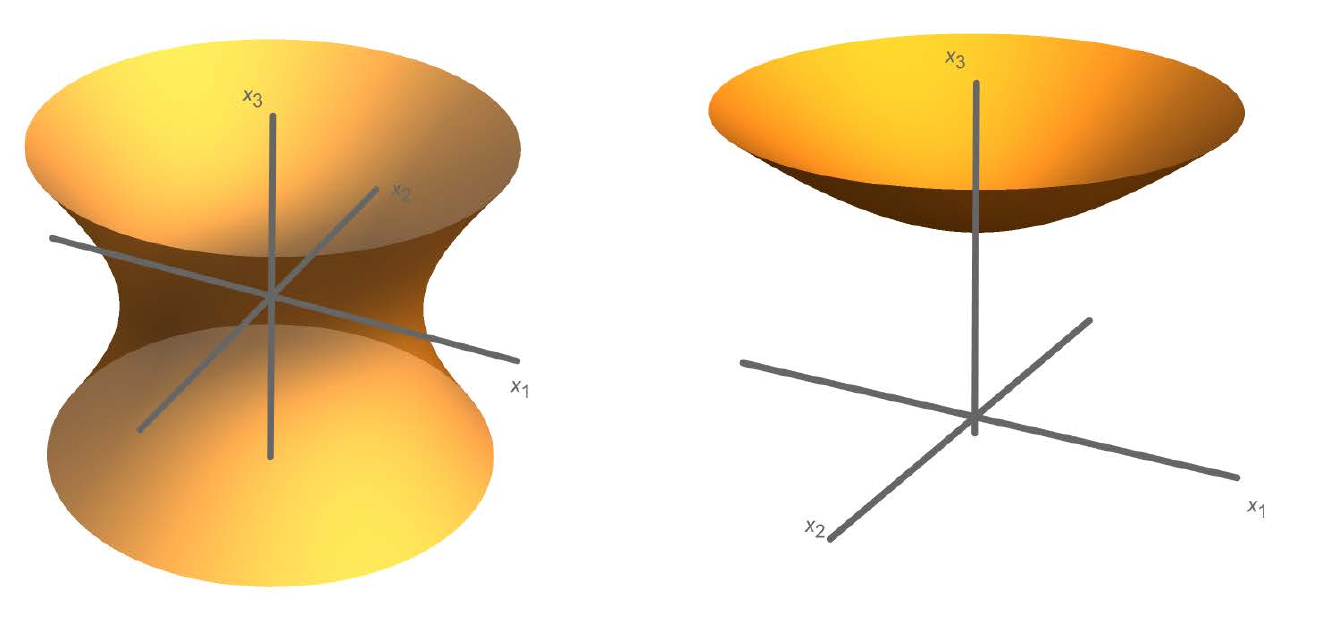}
		\caption{De Sitter 2-space and hyperbolic plane in $\L^3$.}
		\label{fig:LorentzSpheres}
	\end{center}
\end{figure}

Following \cite{Lo14}, we recall the local study of a non-degenerate surface $\Sigma$ in $\L^3$ with local unit normal vector field $\nu$ such that $\langle \nu, \nu \rangle =-\epsilon$, where $\epsilon =1$ if $\Sigma $ is spacelike and $\epsilon =-1$ if $\Sigma $ is timelike. We use a local parametrization $\Sigma \equiv X=X(u,v)$, $(u,v)\in U \subseteq \R^2$, of the (spacelike or timelike) immersion $\Sigma \rightarrow \L^3 $. With respect to the local basis $\{ X_u, X_v\}$, the coefficients of the first 
fundamental form are given by 
$$E=\langle X_u,X_u \rangle, \ F=\langle X_u,X_v \rangle , \ G=\langle X_v,X_v \rangle .$$
Denote $W:=EG-F^2$. Then  $\Sigma $ is spacelike if $W>0 $, and timelike if $W<0$. The
unit normal vector field $$\nu=\frac{X_u \times X_v}{|X_u \times X_v|}=\frac{X_u \times X_v}{\sqrt{\epsilon W}},$$ verifies $\langle \nu, \nu \rangle =-\epsilon$. We remark that the sign of $\epsilon $ here is the opposite of the one in \cite{Lo14}.
The coefficients of the second fundamental form with respect to $\{ X_u, X_v\}$ are given by $$e=\langle X_{uu},\nu \rangle, \ f=\langle X_{uv},\nu \rangle , \ g=\langle X_{vv},\nu \rangle .$$
Moreover, the mean curvature $H$ and the Gauss curvature $  K_{\rm G}$ of $\Sigma $ are expressed by
 $$H=-\frac{\epsilon}{2}\,\frac{eG-2fF+gE}{EG-F^2}, \quad  K_{\rm G}=-\epsilon \, \frac{eg-f^2}{EG-F^2} .$$
If $H^2+\epsilon K_{\rm G} > 0$, the Weingarten endomorphism $A_\nu$ is diagonalizable and then
\begin{equation}\label{eq:H}
H=-\frac{\epsilon}{2}(\kappa_1+\kappa_2), \quad K_{\rm G}=-\epsilon \, \kappa_1\kappa_2, 
\end{equation}
where $\kappa_1, \kappa_2$ are the principal curvatures. When $A_\nu$ is diagonalizable,  
the condition $H^2+\epsilon K_{\rm G} = 0$ characterizes the umbilical points of the surface.

\subsection{Planar curves in $\L^3$}\label{SectCurves}
If we consider planar curves immersed in $\L^3$, there are three possibilities depending if the corresponding plane is spacelike, timelike of lightlike. For our purposes in this article, it will be enough to consider only the following first two cases. 

Let $\gamma =(x,y):I\subseteq \R \rightarrow \E^2$ be a curve lying in the Euclidean $xy$-plane. We consider that $\gamma $ is arc-length parameterized.  Denote by $\, \dot {} \,$ the derivative with respect to the arc parameter. Then the Frenet frame of $\gamma $ is given by
$T=\dot \gamma =(\dot x, \dot y)$, $N=J \dot \gamma  =(-\dot y, \dot x)$, where $T$ (resp.\ $N$) is the unit tangent (resp.\ normal) vector of $\gamma$.  
%Arc length parameterized: $\langle T, T \rangle =1 = \langle N, N \rangle$ 
Since $ \dot x^2 + \dot y^2=1$, we can write $T=(\cos \theta, \sin \theta)$, for a certain angle function $\theta$. The Frenet equations of $\gamma$ are written as $\dot T=\kappa \, N$, $\dot N= -\kappa \, T$, where $\kappa $ is the (signed) curvature given by
$\kappa =  \langle \dot T, N \rangle =  \det (\dot \gamma , \ddot \gamma)= \dot x \ddot y-\ddot x \dot y= \dot \theta$.

%\vspace{0.2cm} %See \cite{Lo14}.

In a similar way, let $\gamma =(x,z):I\subseteq \R \rightarrow \L^2$ be a unit speed curve lying in the Lorentzian $xz$-plane. The Frenet frame of $\gamma $ is now given by
$T=\dot \gamma =(\dot x, \dot z)$, $N=\dot \gamma ^ \perp =(\dot z, \dot x)$. The unit speed condition here means that $\langle T, T \rangle =\epsilon $, where $\epsilon =1$ if $\gamma $, i.e.\ $T$, is spacelike, and $\epsilon =-1$ if $\gamma $, i.e.\ $T$, is timelike.
We point out that $ \langle N, N \rangle =-\epsilon$. Since $ \dot x^2-\dot z^2= \epsilon$, $\epsilon =\pm 1$, we can write  $T=(\cosh \varphi, \sinh \varphi)$ when $\epsilon =1$, and  $T=(\sinh \phi, \cosh \phi)$ when $\epsilon =-1$, for certain angle functions $\varphi $ and $\phi$.
The Frenet equations of $\gamma$ are written as $\dot T=\kappa \, N$, $\dot N= \kappa \, T$, where $\kappa $ is the (signed) curvature given by
$\kappa = -\epsilon \langle \dot T, N \rangle = \epsilon \det (\dot \gamma , \ddot \gamma)= \epsilon (\dot x \ddot z-\ddot x \dot z)$; in addition,  $\kappa =\dot \varphi$ if $\epsilon =1$, and $\kappa =\dot \phi$ if $\epsilon =-1$.
If we introduce new coordinates $(u,v)$ in $\L^2$ by means of
$u=x+z$, $v=x-z$, and write $\gamma =(u,v)$, then we have that $\dot u \dot v =\epsilon$. Since now $\dot \gamma =(\dot u,\dot v)$, 
we get $\dot \gamma =(e^\varphi,e^{-\varphi})$ if $\epsilon =1$ , and we arrive at $\dot \gamma =(e^\phi,-e^{-\phi})$ if $\epsilon =-1$.

%%%%%%%%%%%%%%%%%%%%%%%%%%%%%%%%%%%%%%%%%%%%%%%%%%%%%%%%%%%%%%%%%%%%%%%%%%%%%%%%%%%%%%%

\section{The geometric linear momentum of a planar curve (with respect to an axis)}\label{SectMomentum}

We now introduce the fundamental tool for the innovative study of the rotational surfaces of $\L^3$ that we will develop in the next section:
the geometric linear momentum of a planar curve (the generatrix curve) with respect to a line (the axis of revolution). In the interest of making this article self-contained, we compile below in a schematic way the main results concerning this topic which can be found in more detail in \cite{CCI16}, \cite{CCI18}, \cite{CCI20} and \cite{CCIs20}, which will be essential in the next section. We remark that this tool was crucial in the aforementioned references for the study of plane curves whose curvature depends on the distance to a line (a particular case of a more general problem posed by D.~Singer in \cite{S99}).

\subsection{Euclidean curve} Let $\gamma =(x,y):I\subseteq \R \rightarrow \E^2$ be a unit speed curve, i.e.\ $\dot x^2+\dot y^2= 1$, such that
$\kappa=\kappa(y)$. We remark that $y$ is the distance from $\gamma $ to the $x$-axis. We follow the notation of \S \ref{SectCurves} and define the\textit{ geometric linear momentum of $\gamma$ with respect to the $x$-axis} by 
$$ \mathcal K(s)= \dot x (s)  = \cos \theta (s) \in [-1,1],$$
where $\, \dot {} \,$ denotes the derivative with respect to the arc parameter $s$.
We point out that $\mathcal K$  is well defined, up to the sign, depending on the orientation of $\gamma$.
Using the Frenet equations of $\gamma $, we easily conclude that 
%$$-\int \! \kappa(y)d y = \mathcal K(y) $$
$$ d \mathcal K = -\kappa(y) dy.$$
When $y$ is non constant, we can determine the curve $\gamma$
by integrating the separable o.d.e.\ $ \mathcal K^2+\dot y^2= 1$, obtaining
$$  s = s(y) =  \int \!\frac{d y}{\sqrt{1-\mathcal K(y)^2}},$$ and invert it to get $ y=y(s) $. In addition, from the definition of $ \mathcal K$, we have that
$$ x=x(s)=   \int \! \mathcal K (y(s))d s. $$
Eliminating  $s$ in the above two integrals, we determine  $\gamma$ (up to translations along the $x$-axis) as the graph  
$$x=x(y)= \int \frac{\mathcal K(y)\,dy}{\sqrt{1-\mathcal K(y)^2}}= \int \cot \theta (y)\,dy.$$
As a summary, \textit{any Euclidean curve $\gamma =(x,y)$, with $y$ non-constant, is uniquely determined by its geometric linear momentum $\mathcal K$ as a function of the distance from $\gamma $ to the $x$-axis, that is, by $\mathcal K= \mathcal K(y)$. 
The uniqueness is modulo translations in the $x$-direction.}
%Moreover, the curvature of $\gamma $ is given by $\kappa (y)=-\mathcal K' (y)$.

\subsection{Lorentzian curve} Let $\gamma =(x,z):I\subseteq \R \rightarrow \L^2$ be a unit speed curve. This means that  $\dot x^2-\dot z^2= \epsilon$, where $\epsilon =1$ if $\gamma $ is spacelike, and $\epsilon =-1$ if $\gamma $ is timelike. We use again the notation of \S \ref{SectCurves}.

\subsubsection{Case $\kappa=\kappa(x)$} We point out that $|x|$ is the maximum Lorentzian pseudodistance through spacelike geodesics from $\gamma$ to the $z$-axis.
We define the\textit{ geometric linear momentum of $\gamma$ with respect to the $z$-axis} by 
$$ \mathcal K (s)= \dot z (s) = \left\{  
	\begin{array}{ll}
	\sinh \varphi (s) \in (-\infty,+\infty) & {\rm if \ } \epsilon =1 \\	
	\cosh \phi  (s) \geq 1 & {\rm if \ } \epsilon =-1
	\end{array}
		\right. ,$$
where $\, \dot {} \,$ denotes the derivative with respect to the arc parameter $s$. We point out that $\mathcal K$  is well defined, up to the sign, depending on the orientation of $\gamma$.
Using the Frenet equations of $\gamma $, we easily conclude that 
%	$$\int \! \kappa(x)d x = \mathcal K(x) $$
$$ d \mathcal K = \kappa(x) dx.$$
When $x$ is non constant, we can determine the curve $\gamma$
by integrating the separable o.d.e.\ $ \dot x^2 - \mathcal K^2= \epsilon$, obtaining
	$$  s = s(x) =  \int \!\frac{d x}{\sqrt{\mathcal K(x)^2+\epsilon}},$$
	and invert it to get $ x=x(s) $. In addition, from the definition of $ \mathcal K$, we have that
	$$ z=z(s)=   \int \! \mathcal K (x(s))d s .$$
	Eliminating  $s$ in the above two integrals, we determine  $\gamma$ (up to translations along the $z$-axis) as the graph 
	$$z=z(x)= \int \frac{\mathcal K(x)\,dx}{\sqrt{\mathcal K(x)^2+\epsilon}} =
	\left\{  
	\begin{array}{ll}
		\int \tanh \varphi (x) \, dx & {\rm if \ } \epsilon =1 \\	\\
		\int \coth \phi (x) \, dx & {\rm if \ } \epsilon =-1
	\end{array}
	\right.
	$$
As a summary, \textit{any spacelike or timelike curve $\gamma =(x,z)$, with $x$ non-constant, is uniquely determined by its geometric linear momentum $\mathcal K$ as a function of the pseudodistance from $\gamma $ to the $z$-axis, that is, by $\mathcal K= \mathcal K(x)$. 
	The uniqueness is modulo translations in the $z$-direction.}
	
\subsubsection{Case $\kappa=\kappa(z)$} We point out that $|z|$ is the maximum Lorentzian pseudodistance through timelike geodesics from $\gamma$ to the $x$-axis.
We define the\textit{ geometric linear momentum of $\gamma$ with respect to the $x$-axis} by	
$$ \mathcal K(s)= \dot x (s) =\left\{  
	\begin{array}{ll}
		\cosh \varphi (s) \geq 1 & {\rm if \ } \epsilon =1 \\	
		\sinh \phi (s) \in (-\infty,+\infty)  & {\rm if \ } \epsilon =-1
	\end{array}
	\right. ,$$
	where $\, \dot {} \,$ denotes the derivative with respect to the arc parameter $s$. We point out that $\mathcal K$  is well defined, up to the sign, depending on the orientation of $\gamma$.
	Using the Frenet equations of $\gamma $, we easily conclude that 
	%$$\int \! \kappa(z)d x = \mathcal K(z) $$
	$$ d \mathcal K = \kappa(z) dz.$$
	When $z$ is non constant, we can determine the curve $\gamma$
	by integrating the separable o.d.e.\ $  \mathcal K^2- \dot z^2 = \epsilon$, obtaining
	$$  s = s(z) =  \int \!\frac{d z}{\sqrt{\mathcal K(z)^2-\epsilon}},$$ and invert it to get $ z=z(s) $.
	In addition, from the definition of $ \mathcal K$, we have that
	$$ x=x(s)=   \int \! \mathcal K (z(s))d s .$$
Eliminating  $s$ in the above two integrals, we determine  $\gamma$ (up to translations along the  $x$-axis) as the graph 
		$$x=x(z)= \int \frac{\mathcal K(z)\,dz}{\sqrt{\mathcal K(z)^2-\epsilon}}=
		\left\{  
	\begin{array}{ll}
		\int \coth \varphi (z) \, dz & {\rm if \ } \epsilon =1 \\	\\
		\int \tanh \phi (z) \, dz & {\rm if \ } \epsilon =-1
	\end{array}
	\right.$$
	As a summary, \textit{any spacelike or timelike curve $\gamma =(x,z)$, with $z$ non-constant, is uniquely determined by its geometric linear momentum $\mathcal K$ as a function of the pseudodistance from $\gamma$ to the $x$-axis, that is, by $\mathcal K= \mathcal K(z)$. 
		The uniqueness is modulo translations in the $x$-direction.}
	
\subsubsection{Case $\kappa=\kappa(v)$, with $v=x-z$ and $u=x+z$} We point out that $|v|$ is the Lorentzian pseudodistance from $\gamma$ to the lightlike geodesic $x=z$ through the horizontal spacelike geodesic or the vertical timelike geodesic. 
We define the \textit{geometric linear momentum of $\gamma=(u,v)$ with respect to the $u$-axis} by	
$$ \mathcal K(s)= \dot v (s) =\left\{  
	\begin{array}{ll}
		\  \  e^{-\varphi (s)} > 0& {\rm if \ } \epsilon =1 \\	
		-e^{-\phi (s) } < 0 & {\rm if \ } \epsilon =-1
	\end{array}
	\right. ,$$
		where $\, \dot {} \,$ denotes the derivative with respect to the arc parameter $s$. We point out that $\mathcal K$  is well defined, up to the sign, depending on the orientation of $\gamma$.
	Using the Frenet equations of $\gamma $, we conclude that 
%	$$-\int \! \kappa(v)d v = \mathcal K(v) $$
	$$ d \mathcal K = -\kappa(v) dv.$$
	The definition of $ \mathcal K$ implies that
	$$  s = s(v) =  \int \!\frac{d v}{\mathcal K(v)}$$ and, inverting it, we get $ v=v(s) $.
			We can finally determine the curve $\gamma =(u,v)$
	by integrating the separable o.d.e.\ $  \dot u \, \mathcal K= \epsilon$, obtaining 
	$$ u=u(s)=  \epsilon \int \! \frac{ds}{\mathcal K (v(s))} . $$
	Eliminating  $s$ in the above two integrals, we determine  $\gamma$ (up to translations along the $u$-axis) as the graph
		$$u=u(v)=\epsilon \int \! \frac{dv}{\mathcal K (v)^2} =
		\left\{  
		\begin{array}{ll}
			\int e^{2\varphi(v)} \, dv & {\rm if \ } \epsilon =1 \\	\\
		-	\int e^{2\phi(v)} \, dv & {\rm if \ } \epsilon =-1
		\end{array}
		\right.$$
		As a summary, \textit{any spacelike or timelike curve $\gamma =(u,v)$ is uniquely determined by its geometric linear momentum $\mathcal K$ as a function of the pseudodistance from $\gamma $ to the $u$-axis, that is, by $\mathcal K= \mathcal K(v)$. 
			The uniqueness is modulo translations in the $u$-direction.}

\begin{example}\label{ex:geodesics}
Suppose $\kappa =0$. Then the geometric linear momentum $\mathcal K$ must be constant, i.e.\ $\theta\equiv \theta_0 \in (0,\pi)$, $\varphi\equiv\varphi_0 \in \R$ and $\phi\equiv \phi_0 \in \R$. The above computations directly arrive in $\E^2$ at the lines $x = \cot \theta_0 \, y $, and the spacelike geodesics $z=\tanh \varphi_0 \, x$ (when $\epsilon =1$) and the timelike geodesics $z=\coth \phi_0 \, x$ (when $\epsilon =-1$)  in $\L^2$.
\end{example}

%%%%%%%%%%%%%%%%%%%%%%%%%%%%%%%%%%%%%%%%%%%%%%%%%%%%%%%%%%%%%%%%%%%%%%%%%%%%%%%%%%%%%%%

\section{A new approach to rotational surfaces in $\L^3$}\label{SectRotational}

A surface in $\L^3 $ is called a rotational surface (or a surface of revolution) with axis $l$ if it is invariant under the action of rotations with axis $l$, i.e.\  isometries in $\L^3$ which fix point-wise the straight line $l$. Rotations in $\L^3$ are completely determined by the causal character of the rotation axis.
Thus we study non-degenerate rotational surfaces in $\L^3 $ with spacelike, timelike or lightlike \textit{axis of revolution} $l$ and \textit{generatrix curve} $\gamma $ lying in a non-degenerate plane containing $l$ in such a way that $l$ does not meet the curve $\gamma $. In this way, the rotational surface $ S_\gamma^l$ is defined as the orbit of $\gamma $ under the orthogonal transformation of $\L^3 $ with positive determinant that leaves $l$ fixed (see e.g.\ \cite{dS21}, \cite{Du10} or \cite{Lo14}). We will use the notions of \textit{meridians} and \textit{parallels} in the same way that for a surface of revolution in $\E^3$. Since we only consider non-degenerate rotational surfaces, it is enough to consider the case that $\gamma $ is spacelike or timelike. Next we will give explicit parametrizations in all possible cases.
There are three types of 1-parameter
subgroups of isometries of $\L^3$ which leave a line point-wise fixed. We call them
hyperbolic, elliptic or parabolic rotations depending on whether the line fixed (the axis
of revolution) is spacelike, timelike or lightlike. Hence we distinguish the following classes of rotational surfaces in $\L^3$:

\subsection{Hyperbolic rotational surfaces} We can take the spacelike axis as the $x_1$-axis  $l={\rm span}(1,0,0)$. 
In this case, we must also consider two sub-cases according to the generatrix curve is Lorentzian or Euclidean:
\subsubsection{Hyperbolic rotational surfaces of first type:}\label{SectSxz_x} The generatrix curve is Lorentzian. Let $\gamma (s)=(x(s),z(s))$, $z(s)>0$, $s\in I \subseteq \R$. Then $ S_\gamma^l$ will be denoted by  $  { S_{(x,z)}^{x_1}}$ and is given by
$$ X(s,t)=\left( x(s), z(s) \sinh t, z(s) \cosh t \right), \ s\in I, \ t \in \R  . $$
\subsubsection{Hyperbolic rotational surfaces of second type:}\label{SectSxy_x} The generatrix curve is Euclidean. Let $\gamma (s) =(x(s),y(s))$, $y(s)>0$, $s\in I \subseteq \R$. Then $ S_\gamma^l$ will be denoted by $  {S_{(x,y)}^{x_1}}$ and is given by
$$ X(s,t)=\left( x(s), y(s) \cosh t, y(s) \sinh t \right), \ s\in I, \ t \in \R  . $$

\subsection{Elliptic rotational surfaces}\label{SectSxz_z} We can take the timelike axis as the $x_3$-axis $l={\rm span}(0,0,1)$.  Let $\gamma (s)=(x(s),z(s))$, $x(s)>0$, $s\in I \subseteq \R$. Then $ S_\gamma^l$ will be denoted by $  {S_{(x,z)}^{x_3}}$  and is given by
$$ X(s,t)=\left( x(s) \cos t, x(s) \sin t, z(s)  \right), \ s\in I, \ t \in (0,2\pi)  . $$

\subsection{Parabolic rotational surfaces}\label{SectSuv_u} We can take the lightlike as the $u$-axis $l={\rm span}(1,0,1)$, recalling that $u=x_1+x_3$, $v=x_1-x_3$. Let $\gamma (s)=(x(s),z(s))=(u(s),v(s))$, $v(s)>0$, $s\in I \subseteq \R$. Then $ S_\gamma^l$ will be denoted by $  {S_{(u,v)}^u}$  and is given by
$$
\begin{array}{l} 
	X(s,t) = \left( \left(1\!-\!\frac{t^2}{2}\right)x(s)+\frac{t^2}{2}z(s),t\left(-x(s)\!+\!z(s)\right),-\frac{t^2}{2}x(s)+\left(1\!+\!\frac{t^2}{2}\right)z(s) \right) \\ \\
	\hspace{1.2cm} = \left(  \dfrac{u(s)+v(s)(1\!-\!t^2)}{2},-t\, v(s), \dfrac{u(s)-v(s)(1\!+\!t^2)}{2} \right), \ s\in I, \ t \in \R  .
\end{array}
$$

The three possibilities for the causal character of the axis of revolution, combined with the two causal characters of the generatrix curve and the plane where it lies in, is summarized in the following assertion (cf.\ \cite{dS21} or see the proof of Theorem \ref{Th:kmkp}): \textit{the hyperbolic rotational surfaces of second type are always timelike and the remaining rotational surfaces in $\L^3$ are spacelike (resp.\ timelike) if and only if their generatrix curves are spacelike (resp.\ timelike).} We also point out (see e.g.\ \cite[Section 2]{dS21} or the proof of Theorem \ref{Th:kmkp}) that the shape operator is always diagonalizable for surfaces of revolution in $\L^3$. 

It is clear that any rotational surface $ S_\gamma^l$ is completely determined by its generatrix curve $\gamma$.
Applying to $\gamma$ the results concerning its geometric linear momentum with respect to an axis ($l$ in this case) collected in Section \ref{SectMomentum}, taking into account that the uniqueness is modulo translations in the axis direction, we immediately conclude the following main result:

\begin{theorem}\label{Th:Kdetermine}
	Any non cylindrical rotational surface $S_\gamma^l$ in $\L^3$ is uniquely determined, up to translations along the axis of revolution $l$, by the geometric linear momentum $\mathcal K$ of its generatrix curve $\gamma$ with respect to $l$.
\end{theorem}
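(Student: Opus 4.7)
The plan is to observe that, since the surface $S_\gamma^l$ is defined as the orbit of $\gamma$ under the appropriate one-parameter rotation group fixing $l$, and since such rotations act transitively on the non-degenerate planes containing $l$, fixing once and for all a canonical half-plane through $l$ reduces the surface data to the planar curve $\gamma$ in that half-plane. Consequently, two non-cylindrical rotational surfaces with the same axis $l$ coincide if and only if their generatrix curves (chosen in the same canonical half-plane) coincide as subsets. The theorem then follows immediately by applying, case by case, the corresponding uniqueness statements for planar curves proved in Section \ref{SectMomentum}.

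Concretely, I would enumerate the three possibilities for the causal character of $l$ treated in \S\S 4.1--4.3 and, for each, identify which coordinate along the generatrix measures the (pseudo)distance to $l$: in the hyperbolic case of first type (${S_{(x,z)}^{x_1}}$) the parallel radius is $z$ and the generatrix is Lorentzian with $\kappa=\kappa(z)$; in the hyperbolic case of second type (${S_{(x,y)}^{x_1}}$) it is $y$ with the Euclidean setting of \S 3.1; in the elliptic case (${S_{(x,z)}^{x_3}}$) it is $x$ and $\gamma$ is Lorentzian with $\kappa=\kappa(x)$; and in the parabolic case (${S_{(u,v)}^u}$) the relevant pseudodistance is $v$ with $\kappa=\kappa(v)$. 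The non-cylindrical hypothesis ensures that this coordinate is non-constant along $\gamma$, which is precisely the hypothesis under which the summary statements of \S\S 3.1, 3.2.1, 3.2.2 and 3.2.3 guarantee that $\gamma$ is recoverable from $\mathcal K$ by quadratures, modulo translations along $l$.

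The main step to organize carefully is bookkeeping: verifying that the translation ambiguity in each summary of Section \ref{SectMomentum} corresponds exactly to a translation along $l$ (not along some transverse direction), and checking that the three causal types of axes are covered without redundancy. Once this case analysis is set up, the conclusion is automatic, because a translation along $l$ on the generatrix curve induces precisely the same translation along $l$ on the entire orbit $S_\gamma^l$, and any other discrepancy between generatrices would propagate to a genuine geometric difference between the two surfaces. I do not anticipate any substantive obstacle; the theorem is essentially a synthesis of Section \ref{SectMomentum} with the parametrizations of \S\S 4.1--4.3.
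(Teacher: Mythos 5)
Your proposal is correct and follows essentially the same route as the paper: reduce the surface to its generatrix curve in a fixed half-plane through $l$, identify the relevant (pseudo)distance coordinate in each of the four cases (matching Remark \ref{Re:r}), and invoke the quadrature/uniqueness results of Section \ref{SectMomentum}, with the non-cylindrical hypothesis guaranteeing that coordinate is non-constant. The paper's own proof is exactly this synthesis, stated even more briefly.
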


\begin{remark}\label{Re:Cylinders} We point out that we must exclude the right cylinders in Theorem \ref{Th:Kdetermine} because they correspond to the constant solutions ($x\equiv x_0$, $y\equiv y_0$, $z\equiv z_0$) of the separable differential equations that we solved in Section \ref{SectMomentum}. These cylinders  are usually called \textit{Lorentzian cylinders} if the axis is timelike and \textit{hyperbolic cylinders} if the axis is spacelike (see e.g.\ Example 3.3 in \cite{Lo14}). Following the above notation, we have the Lorentzian cylinder 
$X(s,t)=(x_0 \cos t, x_0 \sin t,s)$, $x_1^2+x_2^2=x_0^2$, which is timelike, the timelike hyperbolic cylinder $X(s,t)=(s,y_0 \cosh t, y_0 \sinh t)$, $x_2^2-x_3^2=y_0^2$, and the spacelike hyperbolic cylinder
$X(s,t)=(s,z_0 \sinh t, z_0 \cosh t)$, $x_3^2-x_2^2=z_0^2$. They all have constant principal curvatures, one of which is zero. See Figure \ref{fig:Cylinders}.
\end{remark}
\begin{figure}[h]
	\begin{center}
		\includegraphics[height=5cm]{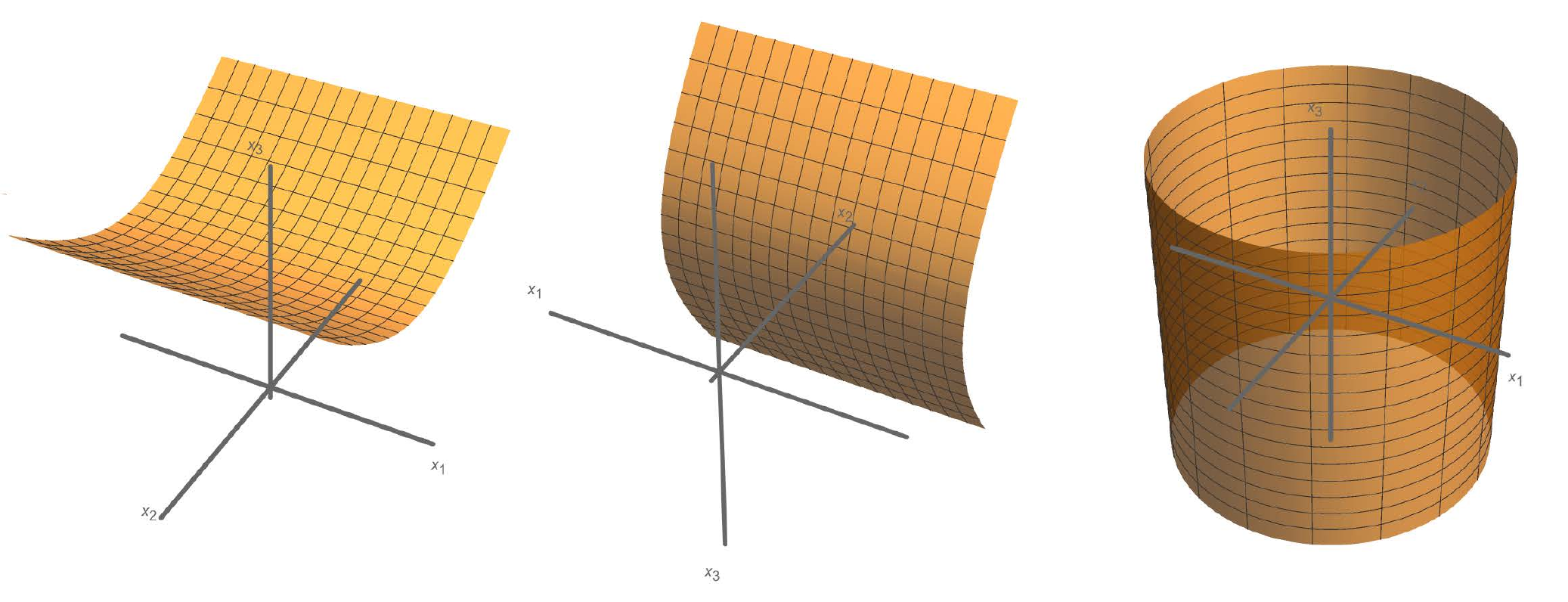}
		\caption{Cylinders $x_3^2-x_2^2=z_0^2$, $x_2^2-x_3^2=y_0^2$, $x_1^2+x_2^2=x_0^2$, in $\L^3$ ($x_0,y_0,z_0\in \R$).}
		\label{fig:Cylinders}
	\end{center}
\end{figure}
Making use of the description of the rotational surfaces of $\L^3$ given in Sections \ref{SectSxz_x}, \ref{SectSxy_x}, \ref{SectSxz_z} and \ref{SectSuv_u} and the study carried out in Section \ref{SectMomentum}, we are able to make fully explicit the statement of Theorem \ref{Th:Kdetermine} in the following key result of this paper:

%\newpage

\begin{corollary}\label{cor:Param} \phantom{salto}
	
\begin{enumerate} 
	
\item[\bf (1)] Any hyperbolic rotational surface of first type in $\L^3$  is locally congruent to the surface $  {S_{(x,z)}^{x_1}}$ given by
$$
 X_\epsilon(z,t)=\left(  x_\epsilon (z),z \, \sinh t, z \, \cosh t \right),
 \ t \in \R ,
$$
where the spacelike ($\epsilon =1$) or timelike ($\epsilon =-1$) generatrix curve $(x,z)$ is given by the graph
\begin{equation}\label{eq:xez}
x=x_\epsilon (z)=\int \!\frac{\mathcal K(z) \, dz}{\sqrt{\mathcal K(z)^2-\epsilon}},
\end{equation}
$z\in I=\{ z >0 : \mathcal K(z)^2-\epsilon > 0 \} $,
and $\mathcal K =\mathcal K(z)$ is the geometric linear momentum of $(x,z)$ with respect to the $x_1$-axis.
Its implicit equation is written as
\begin{equation}\label{eq:Imp_x1}
 x_3^2-x_2^2=z_\epsilon (x_1)^2,
\end{equation}
with $z=z_\epsilon (x)$ the inverse function of \eqref{eq:xez}.

\item[\bf (2)] Any hyperbolic rotational surface of second type in $\L^3$  is locally congruent to the surface $  {S_{(x,y)}^{x_1}}$ given by
$$
  X(y,t)=\left( x (y) , y \, \cosh t, y \, \sinh t \right),
 \ t \in \R ,
$$
where the Euclidean generatrix curve $(x,y)$ is given by the graph
\begin{equation}\label{eq:xey}
x= x (y)=\int \!\frac{\mathcal K(y) \, dy}{\sqrt{1-K(y)^2}},
\end{equation}
$ y\in I=\{ y>0 :  1-\mathcal K(y)^2 > 0 \} $,
and $\mathcal K =\mathcal K(y)$ is the geometric linear momentum of $(x,y)$ with respect to the $x_1$-axis.
Its implicit equation is written as
\begin{equation}\label{eq:Imp_x2}
 x_2^2-x_3^2=y (x_1)^2,
\end{equation}
with $y=y(x)$ the inverse function of \eqref{eq:xey}.

\item[\bf (3)] Any elliptic rotational surface in $\L^3$  is locally congruent to the surface $  {S_{(x,z)}^{x_3}}$ given by
$$
  X_\epsilon(x,t)=\left( x \, \cos t, x \, \sin t,  z_\epsilon (x) \right),
 \ t \in (0,2\pi),
$$
where the spacelike ($\epsilon =1$) or timelike ($\epsilon =-1$) generatrix curve $(x,z)$ is given by the graph
\begin{equation}\label{eq:zex}
z= z_\epsilon (x)=\int \!\frac{\mathcal K(x) \, dx}{\sqrt{\mathcal K(x)^2+\epsilon}},
\end{equation}
$ x\in I=\{ x >0 :  \mathcal K(x)^2  + \epsilon > 0 \} $,
and $\mathcal K =\mathcal K(x)$ is the geometric linear momentum of $(x,z)$ with respect to the $x_3$-axis.
Its implicit equation is written as
\begin{equation}\label{eq:Imp_z}
 x_1^2+x_2^2=x_\epsilon (x_3)^2,
\end{equation}
with $x=x_\epsilon (z)$ the inverse function of \eqref{eq:zex}.

\item[\bf (4)] Any parabolic rotational surface in $\L^3$ is locally congruent to the surface $  {S_{(u,v)}^u}$ given by
$$
\hspace{1cm} 	X_\epsilon(v,t)=\left( \frac{u_\epsilon (v)}{2} + \frac{v}{2}(1\!-\!t^2), -t \, v,  \frac{u_\epsilon (v)}{2} - \frac{v}{2}(1\!+\!t^2) \right),
\ t \in \R ,
$$
where the spacelike ($\epsilon =1$) or timelike ($\epsilon =-1$) generatrix curve $(u,v)$ is given by the graph
\begin{equation}\label{eq:uev}
u=u_\epsilon (v)=\epsilon \int \!\frac{dv}{\mathcal K(v)^2},
\end{equation}
$ v\in I=\{ v >0 : \mathcal K(v) > 0 \} $,
and $\mathcal K =\mathcal K(v)$ is the geometric linear momentum of $(u,v)$ with respect to the $u$-axis.
Its implicit equation is written as
\begin{equation}\label{eq:Imp_u}
 x_1^2+x_2^2-x_3^2=(x_1-x_3)u_\epsilon(x_1-x_3).
\end{equation}
\end{enumerate}
\end{corollary}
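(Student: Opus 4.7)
The plan is to prove each of the four items by combining the orbit parametrizations given in Sections \ref{SectSxz_x}--\ref{SectSuv_u} with the explicit reconstruction of a planar curve from its geometric linear momentum collected in Section \ref{SectMomentum}. Theorem \ref{Th:Kdetermine} together with Remark \ref{Re:Cylinders} already guarantees that, outside the cylindrical exceptions, the generatrix $\gamma$ is determined modulo translation along $l$ by $\mathcal K$; what remains is to write down the appropriate graph formula for $\gamma$ and substitute it into the relevant orbit.

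For each item I would first identify which sub-case of Section \ref{SectMomentum} applies, from the causal character of the axis $l$ and of the plane containing $\gamma$. In item (1), $l$ is the spacelike $x_1$-axis, $\gamma$ lies in the timelike $x_1x_3$-plane, and the relevant pseudodistance is $|z|$ (attained along timelike geodesics), so the Lorentzian sub-case $\kappa=\kappa(z)$ produces exactly \eqref{eq:xez}. In (2), $\gamma$ is a Euclidean curve in a spacelike plane containing $l$, so the Euclidean sub-case delivers \eqref{eq:xey}. In (3), $l$ is timelike and the pseudodistance is $|x|$ (along spacelike geodesics), so the sub-case $\kappa=\kappa(x)$ yields \eqref{eq:zex}. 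Finally, in (4), $l$ is lightlike and the pseudodistance to the null geodesic $x_1=x_3$ is $|v|$, so the sub-case $\kappa=\kappa(v)$ gives \eqref{eq:uev}. The domain $I$ in each case is obtained by demanding that the integrand defining the graph be real and non-vanishing.

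To derive the implicit equations \eqref{eq:Imp_x1}--\eqref{eq:Imp_u}, I would substitute the graph into the orbit parametrization and use the relevant hyperbolic, circular or Lorentzian identity describing the orbit. Explicitly: in (1), $(x_2,x_3)=(z\sinh t,z\cosh t)$ gives $x_3^2-x_2^2=z^2$; in (2), $(x_2,x_3)=(y\cosh t,y\sinh t)$ gives $x_2^2-x_3^2=y^2$; in (3), $(x_1,x_2)=(x\cos t,x\sin t)$ gives $x_1^2+x_2^2=x^2$; and in (4), a short algebraic check on the parametrization shows $x_1-x_3=v$ and $x_1^2+x_2^2-x_3^2=uv$, from which \eqref{eq:Imp_u} follows upon inserting $u=u_\epsilon(v)$. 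Inverting the corresponding graph then produces the functions $z_\epsilon$, $y$, $x_\epsilon$ appearing in the statement.

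The main obstacle is not analytical but rather organizational: one has to match correctly each of the four axis/plane configurations with the appropriate sub-case of Section \ref{SectMomentum} and track carefully the sign $\epsilon$ distinguishing spacelike from timelike generatrix within items (1), (3) and (4). Once this dictionary is made explicit, each claimed identity is either a direct quotation of the graph formulas from Section \ref{SectMomentum} or a one-line verification on the corresponding orbit.
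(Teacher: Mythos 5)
Your proposal is correct and follows essentially the same route as the paper, which presents this corollary as a direct consequence of combining the orbit parametrizations of Sections \ref{SectSxz_x}--\ref{SectSuv_u} with the graph reconstruction formulas of Section \ref{SectMomentum} under the uniqueness statement of Theorem \ref{Th:Kdetermine}. Your dictionary matching each axis/plane configuration to the corresponding sub-case of Section \ref{SectMomentum}, and your verification of the implicit equations (in particular $x_1-x_3=v$ and $x_1^2+x_2^2-x_3^2=uv$ in the parabolic case), supply exactly the details the paper leaves implicit.
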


\begin{example}\label{ex:K0}
%\underline{Planes} ($k_{\rm m}=k_{\rm p}=0$)
Suppose $\mathcal K \equiv 0$. Applying Corollary \ref{cor:Param}(1),
necessarily $\epsilon = -1$ for $S_{(x,z)}^{x_1}$ and we get  $X(z,t)=(x_0,z \sinh t, z \cosh t)$, i.e.\ the timelike plane $x_1=x_0$. Corollary \ref{cor:Param}(2) gives 
$X(y,t)=(x_0,y \cosh t, y \sinh t)$, i.e.\  the timelike plane $x_1=x_0$ again. Using Corollary \ref{cor:Param}(3), necessarily $\epsilon = 1$ for $S_{(x,z)}^{x_3}$ and we arrive at  $X(x,t)=(x \cos t, x \sin t, z_0)$, i.e.\ the spacelike plane $x_3=z_0$. Theorem \ref{Th:Kdetermine} implies that \textit{the (spacelike and timelike) planes are uniquely determined by a null geometric linear momentum.} %$\mathcal K \equiv 0$
\end{example}

\begin{remark}\label{Re:K^2}
	Using \eqref{eq:xez}, \eqref{eq:xey}, \eqref{eq:zex} and \eqref{eq:uev}, we point out that we can deduce new expressions for the square $\mathcal K^2$ of the geometric linear momentum  of a rotational surface in $\L^3$ in terms of the derivative of the graphs playing the role of generatrix curve at each case:
\begin{equation}\label{eq:K4}
	\mathcal K (z)^2= \frac{\epsilon \, x_\epsilon ' (z)^2}{x_\epsilon ' (z)^2 -1}, \
%\end{equation}
%\begin{equation}\label{eq:K2}
	\mathcal K (y)^2= \frac{x ' (y)^2}{1+x' (y)^2}, \
%\end{equation}
%\begin{equation}\label{eq:K3}
	\mathcal K (x)^2= \frac{\epsilon \, z_\epsilon ' (x)^2}{1-z_\epsilon ' (x)^2}, \
%\end{equation}
%\begin{equation}\label{eq:K4}
	\mathcal K (v)^2= \frac{\epsilon}{u_\epsilon ' (v)}.
\end{equation}
\end{remark}

The following result corroborates the fact that the geometric linear momentum of the generatrix curve with respect to the axis of revolution  fully determines a rotational surface in $\L^3$ (Theorem \ref{Th:Kdetermine}) by expressing the geometry of the surface (concretely, its principal curvatures) in terms of it. 

\begin{theorem}\label{Th:kmkp}
	The principal curvatures $k_{\rm m}$ (along meridians) and $k_{\rm p}$ (along parallels)  of a non cylindrical rotational surface $S_\gamma^l$ in $\L^3$ are given by
	\begin{equation}\label{eq:kmkp}
	   k_{\rm m}=\mathcal K' (r) , \quad k_{\rm p}=\frac{\mathcal K(r)}{r},
	\end{equation} 
where $\mathcal K=\mathcal K (r)$ is the geometric linear momentum of its generatrix curve $\gamma$ with respect to $l$, and $r$ is the (pseudo)distance from $\gamma$ to $l$.
\end{theorem}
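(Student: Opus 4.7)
My plan is to verify \eqref{eq:kmkp} by a case-by-case computation relying on the four parametrizations $X(r,t)$ of Corollary \ref{cor:Param}, with the meridian variable ($z$, $y$, $x$ or $v$ in each case) playing the role of the (pseudo)distance $r$ from $\gamma$ to the axis. In every case, the rotation $X_t$ is a Killing vector field tangent to the parallels and orthogonal to the generatrix; hence $F=\langle X_r,X_t\rangle=0$ and, by symmetry (or by direct calculation), also $f=\langle X_{rt},\nu\rangle=0$. Thus $\{X_r,X_t\}$ is a principal frame, which incidentally reconfirms that the shape operator on a rotational surface of $\L^3$ is always diagonalizable, and the principal curvatures split as $k_{\rm m}=e/E$, $k_{\rm p}=g/G$ (with signs fixed by the conventions of Section \ref{SectPreliminaries}).

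The bulk of the proof is then the direct computation of $E$, $G$, $e$, $g$ in each of the four cases, followed by simplification through the identities \eqref{eq:K4}. A first benefit of this rewriting is that $W=EG$ is expressed in closed form in terms of $\mathcal K(r)$, and its sign immediately recovers the causal character of each rotational surface announced before Theorem \ref{Th:Kdetermine}. The quotient $g/G$ should simplify at once to $\mathcal K(r)/r$, yielding the formula for $k_{\rm p}$. For the meridian, instead of expanding $e/E$ I would use the following geometric shortcut: each meridian is an isometric copy of $\gamma$, and the surface normal $\nu$ restricted to a meridian coincides, up to sign, with the Frenet normal of $\gamma$; hence $k_{\rm m}$ agrees with the signed curvature $\kappa$ of the generatrix, and the identity $d\mathcal K=\pm\kappa(r)\,dr$ derived in Section \ref{SectMomentum} delivers $k_{\rm m}=\mathcal K'(r)$, after a careful sign check against the orientation of $\nu$.

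The main technical obstacle I foresee is the parabolic case: the parametrization $X_\epsilon(v,t)$ is quadratic in $t$, so $X_{vv}$, $X_{vt}$ and $X_{tt}$ require more delicate Lorentzian manipulations, and the cross product $X_v\times X_t$ must be handled with attention to the $\epsilon$-signs; one must also check that the formula is uniform across the two values $\epsilon=\pm 1$ within each class. A reassuring final sanity check is Example \ref{ex:K0}: when $\mathcal K\equiv 0$ both $\mathcal K'(r)$ and $\mathcal K(r)/r$ vanish, so \eqref{eq:kmkp} correctly forces all principal curvatures to be zero and the surface to be a plane.
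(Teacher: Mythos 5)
Your proposal is correct and follows essentially the same route as the paper: a case-by-case computation of the (diagonal) first and second fundamental forms for the four families of rotational surfaces, reading off $k_{\rm p}=g/G=\mathcal K(r)/r$ and identifying $k_{\rm m}$ with $\pm\kappa=\mathcal K'(r)$ via the relations $d\mathcal K=\pm\kappa\,dr$ of Section \ref{SectMomentum}. The only cosmetic difference is that the paper works in arc-length coordinates $(s,t)$ rather than $(r,t)$, and obtains $k_{\rm m}$ directly from $e/E$ instead of your meridian-normal shortcut; both versions require exactly the sign bookkeeping you flag.
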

\begin{remark}\label{Re:r}
Looking at Corollary \ref{cor:Param}, we point out that	$r=z$ for $S_{(x,z)}^{x_1}$, $r=y$ for $S_{(x,y)}^{x_1}$,  $r=x$ for $S_{(x,z)}^{x_3} $ and $r=v$ for $S_{(u,v)}^u $.
\end{remark}
\begin{proof}
Given a rotational surface $S_\gamma^l$ in $\L^3$, we control its local geometry by computing its first and second fundamental forms, $I$ and $I\!I$, following Section \ref{SectSurfaces} and using the $(s,t)$-coordinates given in Section \ref{SectRotational}. Both fundamental forms are expressed in terms of the geometry of the generatrix curve $\gamma$, so we will make use of the study carried out in Sections \ref{SectCurves} and \ref{SectMomentum}. As usual, $\kappa $ denotes the curvature of $\gamma $ and $\mathcal K$ its geometric linear momentum with respect to $l$.

In this way, starting at Section \ref{SectSxz_x}, a long straightforward computation for $S_{(x,z)}^{x_1}$ gives that $I\equiv \epsilon \, ds^2+z^2\, dt^2$, and  
$I\!I\equiv \epsilon \kappa \, ds^2+z \mathcal K \, dt^2$. Similarly, from Section \ref{SectSxy_x}, we obtain for $S_{(x,y)}^{x_1}$ that $I\equiv ds^2-y^2\, dt^2$, and  $I\!I\equiv - \kappa \, ds^2-y \mathcal K \, dt^2$. Using Section \ref{SectSxz_z}, we deduce for $S_{(x,z)}^{x_3}$ that $I\equiv \epsilon \, ds^2+x^2\, dt^2$, and  $I\!I\equiv \epsilon \kappa \, ds^2+x \mathcal K \, dt^2$. And, taking into account Section \ref{SectSuv_u}, we arrive for $S_{(u,v)}^u$ at $I\equiv \epsilon \, ds^2+v^2\, dt^2$, and  $I\!I\equiv -\epsilon \kappa \, ds^2+v \mathcal K \, dt^2$.

Hence we conclude the following expressions for the mean curvature $H$ and the Gauus curvature $K_{\rm G}$ of $S_\gamma^l$ in each case:
\begin{equation}\label{eq:Hrot}
	2H= \left\{ 
	\begin{array}{ll}
		-\epsilon \, (\kappa (z) + \mathcal K(z)/ z ) & {\rm for \ } S_{(x,z)}^{x_1} \\  
		-\kappa (y) + \mathcal K(y)/ y & {\rm for \ } S_{(x,y)}^{x_1} \\
		-\epsilon \, (\kappa (x) + \mathcal K(x)/ x ) & {\rm for \ } S_{(x,z)}^{x_3} \\ 
		-\epsilon \, (-\kappa (v) + \mathcal K(v)/ v ) & {\rm for \ } S_{(u,v)}^u 
	\end{array}
	\right.
\end{equation}
and
\begin{equation}\label{eq:KGrot}
	K_{\rm G} = \left\{ 
	\begin{array}{ll}
		-\epsilon \, \kappa (z) \, \mathcal K(z)/ z  & {\rm for \ } S_{(x,z)}^{x_1} \\  
		-\kappa (y) \, \mathcal K(y)/ y & {\rm for \ } S_{(x,y)}^{x_1} \\
		-\epsilon \, \kappa (x) \,  \mathcal K(x)/ x  & {\rm for \ } S_{(x,z)}^{x_3} \\ 
		\epsilon \, \kappa (v)  \, \mathcal K(v)/ v  & {\rm for \ } S_{(u,v)}^u 
	\end{array}
	\right.
\end{equation}
Looking at \eqref{eq:H}, \eqref{eq:Hrot} and \eqref{eq:KGrot}, and using that $\mathcal K'(z)=\kappa (z)$, $\mathcal K'(y)=-\kappa (y)$, $\mathcal K'(x)=\kappa (x)$ and $\mathcal K'(v)=-\kappa (v)$ (see Section \ref{SectMomentum}), we finally prove \eqref{eq:kmkp}.
%First fundamental forms:
%
%$$
%S_{(x,z)}^x: I \equiv \frac{\epsilon \, dz^2}{\mathcal K(z)^2-\epsilon} + z^2 \, dt^2
%$$
%
%$$
%S_{(x,y)}^x: I \equiv \frac{dy^2}{1-\mathcal K(y)^2} - y^2 \, dt^2
%$$
%
%$$
%S_{(x,z)}^z: I \equiv \frac{\epsilon \, dx^2}{\mathcal K(x)^2+\epsilon} + x^2 \, dt^2
%$$
%
%$$
%S_{(u,v)}^u: I \equiv \frac{\epsilon \, dv^2}{\mathcal K(v)^2} + v^2 \, dt^2
%$$
\end{proof}

On the other hand, \textit{Weingarten surfaces} are defined by a functional relation between their principal curvatures, see for instance \cite{Ch45} or \cite{W61}.
For rotational Weingarten surfaces, we simply write $\Phi(k_{\text m}, k_{\text p})=0$, with $\Phi $ a differentiable function.
Thus, taking into account \eqref{eq:kmkp}, we easily deduce the following immediate consequence of Theorem \ref{Th:kmkp}.

\begin{corollary}\label{cor:edoW}
Any Weingarten relation $\Phi(k_{\text m}, k_{\text p})=0$ on a non cylindrical rotational surface $S_\gamma^l$ in $\L^3$ translates through \eqref{eq:kmkp} into a first-order ordinary differential equation $\hat \Phi (r,\mathcal K , \mathcal K')=0$ for the geometric linear momentum $\mathcal K$  of its generatrix curve $\gamma$ with respect to $l$ as a function of  the (pseudo)distance $r$ from $\gamma$ to $l$.
\end{corollary}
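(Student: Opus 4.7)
The plan is to obtain this corollary as an immediate consequence of Theorem \ref{Th:kmkp}, since that result has already done the heavy lifting: it provides the unified formulas $k_{\text m} = \mathcal K'(r)$ and $k_{\text p} = \mathcal K(r)/r$ valid across all four geometric configurations of Corollary \ref{cor:Param}, with the role of $r$ identified in each case by Remark \ref{Re:r}. What remains is a plain substitution.

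Concretely, I would define
$$\hat\Phi(r,\mathcal K,\mathcal K') := \Phi\!\left(\mathcal K',\, \frac{\mathcal K}{r}\right),$$
and observe that, along the generatrix curve $\gamma$ parametrized as in Corollary \ref{cor:Param}, the Weingarten condition $\Phi(k_{\text m},k_{\text p}) = 0$ is equivalent to $\hat\Phi(r,\mathcal K(r),\mathcal K'(r)) = 0$. The resulting equation is manifestly first-order, since no derivative of $\mathcal K$ of order higher than $\mathcal K'$ appears. The non cylindricity hypothesis guarantees that $r$ is non constant along $\gamma$ (see Remark \ref{Re:Cylinders}), so the passage from the arc parameter $s$ to $r$ as independent variable is legitimate, matching exactly the graph representations already used in Corollary \ref{cor:Param} and the differential relations $d\mathcal K = \pm\kappa\, dr$ established in Section \ref{SectMomentum}.

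I do not foresee any real obstacle: the analytical content has been entirely absorbed into Theorem \ref{Th:kmkp}, where the four causal configurations (hyperbolic of first and second type, elliptic, parabolic) had to be reconciled into one momentum-distance formula. Once that unification is in place, the Weingarten-to-ODE translation is a one-line manipulation. The value of the statement lies less in its difficulty than in its uniformity: a single first-order ODE $\hat\Phi(r,\mathcal K,\mathcal K') = 0$ now encodes any Weingarten problem irrespective of the causal character of the axis $l$ or of the generatrix $\gamma$, which is precisely what makes this formulation the common platform for the classifications announced in the introduction.
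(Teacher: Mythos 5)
Your proposal is correct and coincides with the paper's own treatment: the corollary is stated there as an immediate consequence of Theorem \ref{Th:kmkp}, obtained precisely by substituting $k_{\text m}=\mathcal K'(r)$ and $k_{\text p}=\mathcal K(r)/r$ from \eqref{eq:kmkp} into $\Phi$, which is exactly your definition of $\hat\Phi$. Your additional observation that non-cylindricity ensures $r$ is a legitimate independent variable is consistent with Remark \ref{Re:Cylinders} and adds nothing contradictory.
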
	

\begin{example}\label{ex:Kcte}
	Suppose $k_{\rm m}=0$. Apart from the cylinders (see Remark \ref{Re:Cylinders}), using \eqref{eq:kmkp} we deduce 
	that $\mathcal K $ is constant. If $\mathcal K =0 $, from Example \ref{ex:K0} we arrive at the spacelike and timelike planes (which are the only rotational surfaces in $\L^3$ with $k_{\rm p}=0$). If $\mathcal K $ is a non-null constant, i.e.\ $\theta\equiv \theta_0 \in (0,\pi)$, $\varphi\equiv\varphi_0 \in \R$ and $\phi\equiv \phi_0 \in \R$ (see Section \ref{SectMomentum}), applying Corollary \ref{cor:Param} we reach the {\em Lorentzian rotational cones} collected in Table \ref{tab:Kcte}. See Figure \ref{fig:LorentzCones}.
	%$,\ t\in\R$ $, \ t\in (0,2\pi)$.
	Theorem \ref{Th:Kdetermine} implies that \textit{the (spacelike and timelike) Lorentzian rotational cones are uniquely determined by a non-null constant geometric linear momentum.} % $\mathcal K \equiv K_0\neq 0$ 
\begin{table}[h]
		\caption{Lorentzian rotational  cones: $\mathcal{K}$ non-null constant.} 	\label{tab:Kcte}
	$
	\begin{array}{|c||c|c|}
		\hline
	S_\gamma^l & X_\epsilon (r,t) & \text{Implicit equation}\\
		\hline \hline
		
		  {S^{x_1}_{(x,z)}}, \epsilon = 1 &(\coth \varphi_0 \, z,z \sinh t, z \cosh t)  & x_3^2-x_2^2=\tanh^2 \varphi_0 \, x_1^2\\
		\hline
		
		  {S^{x_1}_{(x,z)}}, \epsilon = -1 & (\tanh \phi_0 \, z,z \sinh t, z \cosh t) & x_3^2-x_2^2=\coth^2 \phi_0 \, x_1^2 \\
		\hline \hline
		
		  {S^{x_1}_{(x,y)}} & (\cot \theta_0 \, y,y \cosh t, y \sinh t) & x_2^2-x_3^2=\tan^2 \theta_0 \, x_1^2 \\
		\hline \hline
		
		  {S^{x_3}_{(x,z)}}, \epsilon = 1 & (x \cos t, x \sin t, \tanh \varphi_0 \, x)   & x_1^2+x_2^2=\coth^2 \varphi_0 \, x_3^2 \\
		\hline
		  {{}S^{x_3}_{(x,z)}}, \epsilon = -1 & (x \cos t, x \sin t, \coth \phi_0 \, x) & x_1^2+x_2^2=\tanh^2 \phi_0 \, x_3^2 \\
		\hline \hline
		
		  {S^{u}_{(u,v)}}, \epsilon = 1 & \left( \frac{e^{2\varphi_0}v}{2} \!+\! \frac{v}{2}(1\!-\!t^2), -t \, v,  \frac{e^{2\varphi_0}v}{2} \!-\! \frac{v}{2}(1\!+\!t^2) \right)  & x_1^2\!+\!x_2^2\!-\!x_3^2=e^{2 \varphi_0} (x_1\!-\!x_3)^2\\
		\hline
		
		  {S^{u}_{(u,v)}}, \epsilon = -1 & \left( -\frac{e^{2\phi_0}v}{2} \!+\! \frac{v}{2}(1\!-\!t^2), -t \, v,  -\frac{e^{2\phi_0}v}{2} \!-\! \frac{v}{2}(1\!+\!t^2) \right)  & x_1^2\!+\!x_2^2\!-\!x_3^2=-e^{2 \phi_0} (x_1\!-\!x_3)^2 \\
		\hline
	\end{array}
	$
\end{table}

	\begin{figure}[h]
		\begin{center}
			\includegraphics[height=10cm]{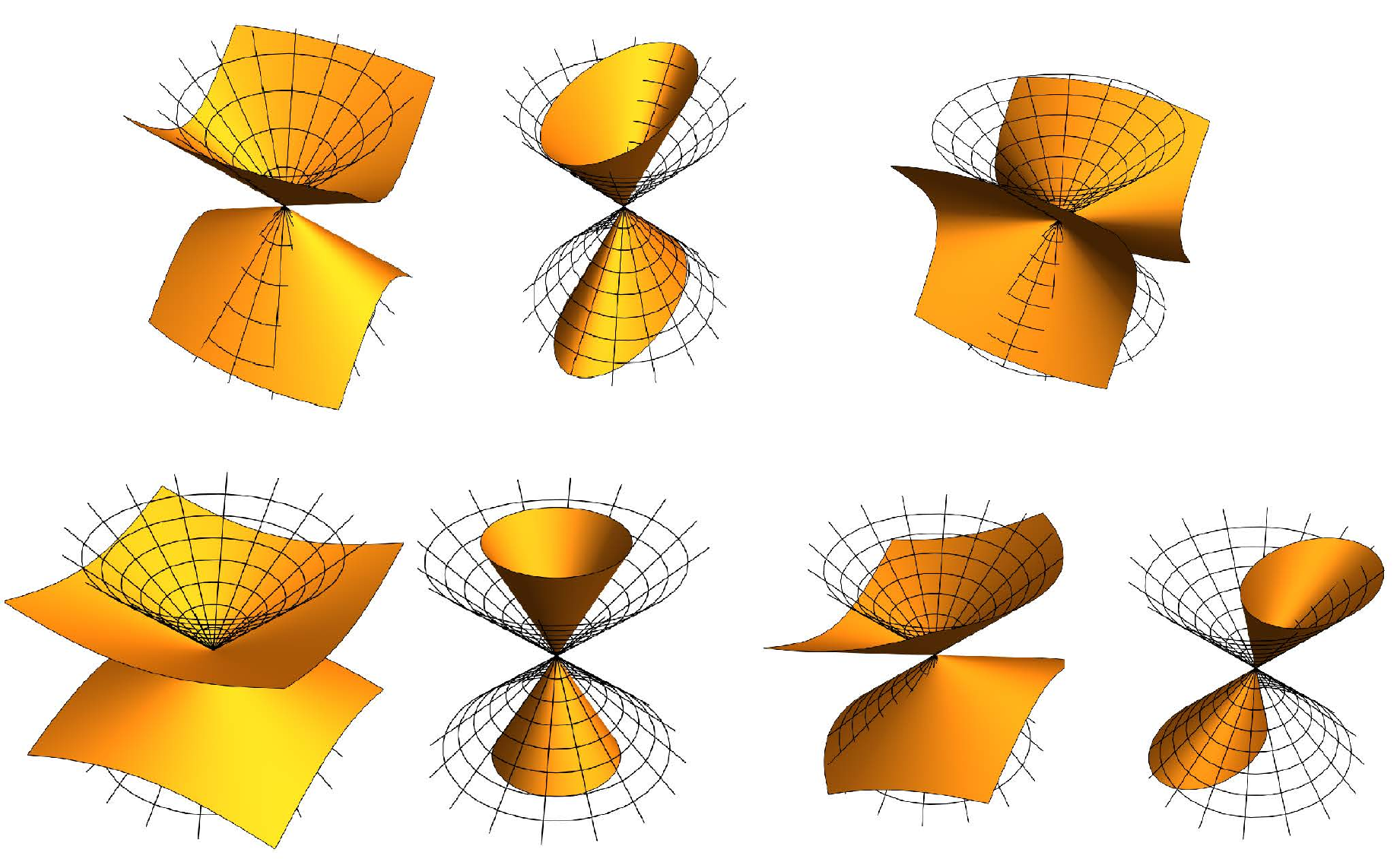}
			\caption{Lorentzian rotational cones in $\L^3$. First row: spacelike and timelike hyperbolic of first and second type; second row: spacelike and timelike elliptic and parabolic.}
			\label{fig:LorentzCones}
		\end{center}
	\end{figure}
\end{example}
\begin{example}\label{ex:K=r/R} 
	Suppose $k_{\rm m} \!=\! k_{\rm p}$. According to \eqref{eq:kmkp}, the corresponding o.d.e.\ is $\mathcal K ' = \mathcal K /r$. The trivial solution $\mathcal K \equiv 0$ leads to the planes (see Example \ref{ex:K0}). We can write the general solution as $\mathcal K (r)=r/R $, $R>0$. Then Corollary \ref{cor:Param} leads to the non flat umbilical surfaces collected in Table \ref{tab:umbilical}. 
	Hence we obtain three different parametrizations of the hyperbolic plane $\H^2_+(R)$ (see \cite{HN83}) and four distinct parametrizations of the de Sitter 2-space $\s^2_1(R)$. 
	Theorem \ref{Th:Kdetermine} implies that \textit{the  hyperbolic plane $\H^2_+(R)$ and the de Sitter 2-space $\s^2_1(R)$ are uniquely determined by the geometric linear momentum $\mathcal K (r)=r/R $, $R>0$}.
\begin{table}[h]
	\caption{Non flat umbilical surfaces.} 	\label{tab:umbilical}
	$
	\begin{array}{|c||c|c|c|}
		\cline{2-4}
		\multicolumn{1}{c|}{}& \K=\K(r) & X_\epsilon(r,t) & \text{Implicit equation}\\
		\hline \hline
		
			  {S^{x_1}_{(x,z)}} & \K(z)=z/R, &\multirow{2}{*}{$\left(\sqrt{z^2-\epsilon R^2}, z\sinh t, z\cosh t\right)$} & \multirow{2}{*}{$x_1^2+x_2^2-x_3^2=-\epsilon R^2$}\\
		(r=z) & R>0  &&\\
		\hline
			  {S^{x_1}_{(x,y)}} & \K(y)=y/R, & \multirow{2}{*}{$\left(-\sqrt{R^2-y^2}, z\cosh t, z\sinh t\right)$} & \multirow{2}{*}{$x_1^2+x_2^2-x_3^2=R^2$}\\
		(r=y) & R>0 &&\\
		\hline
			  {S^{x_3}_{(x,y)}} & \K(x)=x/R, & \multirow{2}{*}{$\left( x\cos t, x\sin t, \sqrt{x^2+\epsilon R^2}\right)$} & \multirow{2}{*}{$x_1^2+x_2^2-x_3^2=-\epsilon R^2$}\\
		(r=x) & R>0 &&\\
		\hline
			  {S^u_{(u,v)}} & \K(v)=v/R, & \multirow{2}{*}{$\left(-\frac{\epsilon R^2}{2v}+\frac{v}{2}(1\!-\!t^2) , -t \, v,   -\frac{\epsilon R^2}{2v} - \frac{v}{2}(1\!+\!t^2) \right)$} & \multirow{2}{*}{$x_1^2+x_2^2-x_3^2=-\epsilon R^2$}\\
		(r=v) & R>0 &&\\
		\hline
	\end{array}
	$	
\end{table}

\end{example}

%%%%%%%%%%%%%%%%%%%%%%%%%%%%%%%%%%%%%%%%%%%%%%%%%%%%%%%%%%%%%%%%%%%%%%%%%%%%%%%%%%%%%%%%%%%%

\section{Linear Weingarten rotational surfaces in $\L^3$}\label{SectLinearWeingarten}

%*** See Section 2 in \cite{CC24}.

In this section we deal with rotational surfaces in $\L^3$ satisfying a linear relation between their principal curvatures. In Example \ref{ex:K=r/R} we discussed the case $k_{\rm m}=k_{\rm p}$. First we now analyze the equation $k_{\rm m}=-k_{\rm p}$, that is, those rotational surfaces such that $H=0$. Recall that 
spacelike surfaces with zero mean curvature are known as \textit{maximal} surfaces. Rotational maximal surfaces in $\L^3$ were classified by Kobayashi in \cite{Ko83}, and the classification of rotational surface with zero mean curvature in $\L^3$, including the timelike ones, was completed in \cite{VW90}. We provide a considerably shorter proof of both results as a direct application of Corollary \ref{cor:Param}.

\begin{theorem}\label{Th:H=0}
	\noindent 
	
	Every spacelike rotational surface with zero mean curvature in $\L^3$ is locally congruent to one of the following (see Figure \ref{fig:H0spacelike}):
	\begin{itemize}
		\item[(i)] the spacelike plane $x_3=z_0$, $z_0\in \R$;
		\item[(ii)] the catenoid of first kind $x_1^2+x_2^2=a^2 \sinh^2 (x_3/a)$, $a>0$;
		\item[(iii)] the catenoid of second kind $x_3^2-x_2^2=a^2 \sin^2 (x_1/a)$, $a>0$;
		\item[(iv)]  the Enneper surface of second kind $x_1^2+x_2^2-x_3^2\!=\!a^2(x_1\!-\!x_3)^4$, $a>0$.
	\end{itemize}
	\noindent 
	
	Every timelike rotational surface with zero mean curvature in $\L^3$ is locally congruent to one of the following (see Figure \ref{fig:H0timelike}):
	\begin{itemize}
		\item[(v)] the timelike plane $x_1=x_0$, $x_0\in \R$; 
		\item[(vi)] the catenoid of third kind $x_1^2+x_2^2=a^2 \sin^2 (x_3/a)$, $a>0$;
		\item[(vii)] the catenoid of fourth kind $x_3^2-x_2^2=a^2 \sinh^2 (x_1/a)$, $a>0$;
		\item[(viii)] the catenoid of fifth kind $x_2^2-x_3^2=a^2 \cosh^2 (x_1/a)$, $a>0$;
		\item[(ix)]  the Enneper surface of third kind $x_1^2+x_2^2-x_3^2\!=\!-a^2(x_1\!-\!x_3)^4$, $a>0$.
	\end{itemize}
\end{theorem}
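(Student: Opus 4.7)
The strategy is to apply directly the momentum-based framework just developed. By \eqref{eq:H} together with Theorem \ref{Th:kmkp}, the zero mean curvature condition $H=0$ is equivalent to $k_{\rm m}+k_{\rm p}=0$, which via \eqref{eq:kmkp} translates into the separable ODE
$$\mathcal{K}'(r)+\frac{\mathcal{K}(r)}{r}=0,\qquad\text{i.e.}\qquad (r\,\mathcal{K}(r))'=0,$$
whose general solution is $\mathcal{K}(r)=c/r$ for some $c\in\mathbb{R}$. For $c=0$ the momentum vanishes identically and Example \ref{ex:K0} yields precisely the spacelike plane of case (i) and the timelike plane of case (v); the right cylinders excluded from Theorem \ref{Th:Kdetermine} (see Remark \ref{Re:Cylinders}) have one zero and one non-zero constant principal curvature and hence cannot be minimal.

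For $c\neq 0$ (and, since $\mathcal{K}$ is only defined up to sign, I may suppose $c>0$), the plan is to feed $\mathcal{K}(r)=c/r$ into each of the four parametric schemes of Corollary \ref{cor:Param} and compute the resulting quadrature, controlling the domain via the sign condition $\mathcal{K}(r)^2\pm\epsilon>0$. In the hyperbolic case of first type $S^{x_1}_{(x,z)}$, \eqref{eq:xez} would give $x=c\arcsin(z/c)$ (when $\epsilon=1$) or $x=c\,\arcsinh(z/c)$ (when $\epsilon=-1$), which combined with \eqref{eq:Imp_x1} reproduce the catenoids (iii) and (vii). In the hyperbolic case of second type $S^{x_1}_{(x,y)}$, \eqref{eq:xey} would give $x=c\,\arccosh(y/c)$ and \eqref{eq:Imp_x2} the catenoid (viii). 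In the elliptic case $S^{x_3}_{(x,z)}$, \eqref{eq:zex} would produce $z=c\,\arcsinh(x/c)$ or $z=c\arcsin(x/c)$ and via \eqref{eq:Imp_z} the catenoids (ii) and (vi). Finally, in the parabolic case $S^u_{(u,v)}$ the integral \eqref{eq:uev} is polynomial, giving $u=\epsilon v^3/(3c^2)$, and \eqref{eq:Imp_u} yields the Enneper-type surfaces (iv) and (ix) after the trivial rescaling $a^2=1/(3c^2)$.

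The proof is short in spirit: the whole classification is packaged into the single ODE $(r\mathcal{K})'=0$. The main effort, and the only place where care is required, lies in the bookkeeping: one must track the six sub-cases (three causal characters of the axis combined with the two causal characters of the generatrix, plus the second-type hyperbolic family, which is always timelike), verify the domain conditions coming from the radicals in \eqref{eq:xez}--\eqref{eq:uev}, and match each resulting parametrization to the precise implicit equation in \eqref{eq:Imp_x1}--\eqref{eq:Imp_u}. Once this is done, the classification of Kobayashi \cite{Ko83} and Van de Woestijne \cite{VW90} drops out without any further analysis.
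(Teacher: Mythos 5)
Your proposal is correct and follows essentially the same route as the paper: reduce $H=0$ to the ODE $\mathcal K'(r)+\mathcal K(r)/r=0$, take $\mathcal K(r)=c/r$, and run the four quadratures of Corollary \ref{cor:Param}, with all resulting antiderivatives and case assignments matching the paper's. The only (harmless) addition is your explicit check that the excluded cylinders cannot be minimal.
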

\begin{figure}[h]
	\begin{center}
		\includegraphics[height=5cm]{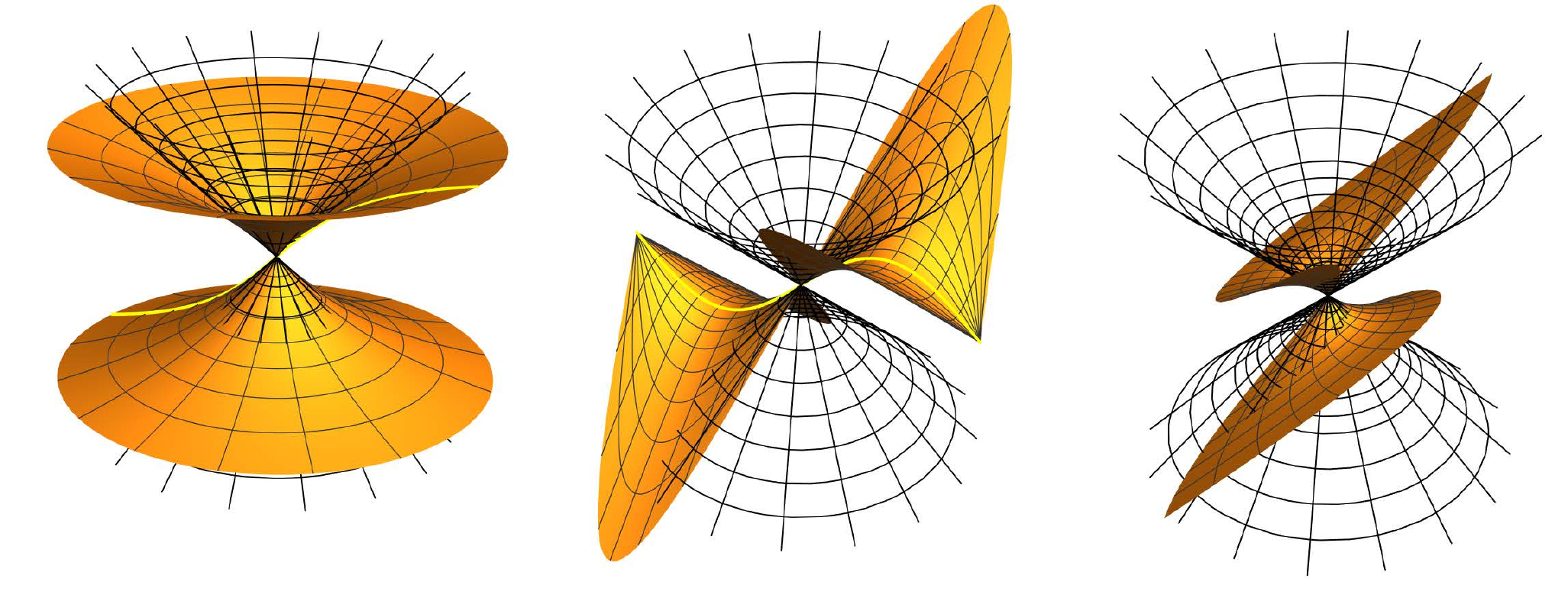}
		\caption{Spacelike rotational surface with zero mean curvature in $\L^3$: the catenoid of first kind, the catenoid of second kind, and the Enneper surface of second kind.}
		\label{fig:H0spacelike}
	\end{center}
\end{figure}

\begin{figure}[h]
	\begin{center}
		\includegraphics[height=10cm,width=10cm]{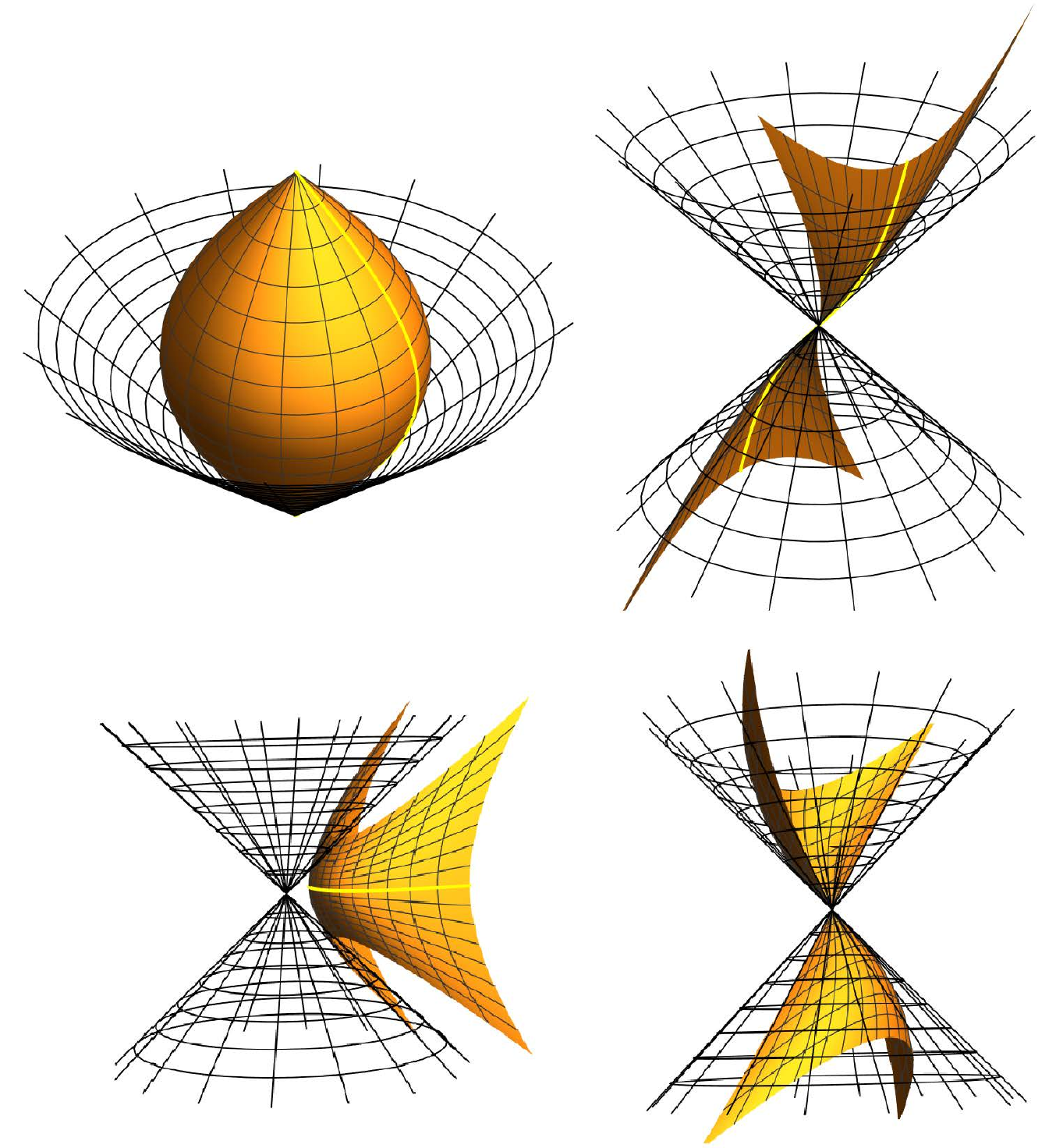}
		\caption{Timelike rotational surface with zero mean curvature in $\L^3$: the catenoid of third kind, the catenoid of fourth kind, the catenoid of fifth kind, and the Enneper surface of third kind.}
		\label{fig:H0timelike}
	\end{center}
\end{figure}
\begin{proof}
	According to Theorem \ref{Th:kmkp}, the hypothesis $H=0$ translates into the linear o.d.e.\ $\mathcal K'(r)+\mathcal K (r)/r=0$. 
	Its general solution is given by $\mathcal K (r)=a/r$, $a\geq 0$. Its constant solution $\mathcal K \equiv 0$ (see Example \ref{ex:K0}) leads to parts (i) and (v). 
	%Up to isometries of $\L^3$, we can assume that the rotational surface is given by $S_{(x,z)}^x$, $S_{(x,y)}^x$, $S_{(x,z)}^z$ or $S_{(u,v)}^u$ and consider $\epsilon =1$ if it is spacelike and $\epsilon =-1$ if it is timelike.
	
	Taking into account Remark \ref{Re:r}, we use Corollary \ref{cor:Param} and get from \eqref{eq:xez} that
	$x_1(z)= a \arcsin (z/a)$ and $x_{-1}(z)= a \arcsinh (z/a)$ in $S_{(x,z)}^{x_1}$. Using this in \eqref{eq:Imp_x1}, we reach parts (iii) and (vii).
	In the same way, from \eqref{eq:xey} we have that 
	$x(y)= a \arccosh (y/a)$ in $S_{(x,y)}^{x_1}$, and \eqref{eq:Imp_x2} leads to part (viii).
	Similarly, from \eqref{eq:zex} we obtain that
	$z_1(x)= a \arcsinh (x/a)$ and $z_{-1}(z)= a \arcsin (x/a)$ in $S_{(x,z)}^{x_3}$, and \eqref{eq:Imp_z} implies parts (ii) and (vi).
	Finally, we deduce from \eqref{eq:uev} that
	$u_1(v)=v^3/3a^2$ and $u_{-1}=-v^3/3a^2$ in $S_{(u,v)}^u$. Renaming $a^2\equiv1/3a^2$, \eqref{eq:Imp_u} conclude parts (iv) and (ix).
\end{proof}

Our next intention is an ostensible generalization of Theorem \ref{Th:H=0} because we aspire to classify the rotational surfaces in $\L ^3$ satisfying the linear Weingarten relation $k_{\text m} = q\,  k_{\text p}$, $q\neq 0$. Note then that Theorem \ref{Th:H=0} would simply be the case $q=-1$. With that in mind and inspired by Definition 2.5 in \cite{CC24}, we begin by introducing a family of rotational surfaces in $\L^3$ which plays the role of the classical Hopf surfaces in $\E^3$ (see \cite{CC24}, \cite{H51} and \cite{K15}). 

\begin{definition}[Lorentzian Hopf surfaces]\label{def:Hopf}
	
	Consider in $\L^3$ the following families of rotational surfaces $S_{\gamma_q^a}^l$, $q\neq 0$, $a>0$, defined according to their following generatrix curves:

	\begin{enumerate}
		\item[(I)]  \textit{Hyperbolic Lorentzian Hopf surfaces of first type, $S_{\gamma_q^a}^{x_1}$:}
	\begin{enumerate}
		\item[(I-T)] with $\gamma_q^a=(x,z)$  given by
		$$
		x= \frac{a}{q}\int_0^t \sinh ^{1/q} v \, dv, \ 	z= a \sinh ^{1/q} t,  \ t >0, \ a>0. 
		$$
		\item[(I-S)] with $\gamma_q^a=(x,z)$ given by
		$$
		x= \frac{a}{q}\int_0^t \cosh ^{1/q} v \, dv, \ 	z= a \cosh ^{1/q} t, \ t\in \R,   \ a>0. 
		$$
	\end{enumerate}
	\item[(II)] \textit{ Hyperbolic Lorentzian Hopf surfaces of second type, $S_{\gamma_q^a}^{x_1}$:}
	
	 with $\gamma_q^a=(x,y)$ is given by
	$$
	 x= -\frac{a}{q}\int_0^t \cos ^{1/q} v \, dv, \ y= a \cos ^{1/q} t, \ t\in (-\pi/2,\pi/2), \ a>0. 
	$$
	\item[(III)]  \textit{Elliptic Lorentzian Hopf surfaces, $S_{\gamma_q^a}^{x_3}$:}
	\begin{enumerate}
		\item[(III-T)] with $\gamma_q^a=(x,z)$ given by
		$$
		x= a \cosh ^{1/q} t, \ z= \frac{a}{q}\int_0^t \cosh ^{1/q} v \, dv, \ t\in \R, \  \ a>0. 
		$$
		\item[(III-S)] with $\gamma_q^a=(x,z)$ given by
		$$
		x= a \sinh ^{1/q} t, \ z= \frac{a}{q}\int_0^t \sinh ^{1/q} v \, dv, \ t >0, \  	 a>0. 
		$$
	\end{enumerate}

	\item[(IV)]  \textit{Parabolic Lorentzian Hopf surfaces, $S_{\gamma_q^a}^{u}$:} 
	 \begin{enumerate}
	 	\item $q\neq 1/2$: with $\gamma_q^a=(u,v)$ given by $$u=\frac{\epsilon a^{2q}}{1-2q}\, v^{1-2q}, \ v>0, \ a>0 \  (\epsilon = \pm 1). $$
	 	\item $q =  1/2$: with $\gamma_{1/2}^a=(u,v)$ given by $$u=\epsilon a^{2q}\, \ln v, \ v>0, \ a>0 \  (\epsilon = \pm 1). $$
	 \end{enumerate}
	\end{enumerate}
\end{definition}
In parts (I), (II) and (III), (-T) means timelike surfaces and (-S) mean spacelike surfaces; in part (IV), $\epsilon =1$ (resp.\ $\epsilon =-1$) corresponds to spacelike (resp.\ timelike) surfaces, as usual. We call them {\em Lorentzian Hopf surfaces} because the Euclidean counterparts were first introduced by Heinz Hopf  \cite{H51} in 1951. It can be verified that when $q=1$, the non flat umbilical surfaces are recovered (see Example \ref{ex:K=r/R}), and when $q=-1$, those ones with zero mean curvature (see Theorem \ref{Th:H=0}). See Figure \ref{fig:HopfLorentz}.
%	https://mathcurve.com/surfaces.gb/revolpropor/revolpropor.shtml

\begin{figure}[h]
	\begin{center}
		\includegraphics[height=12cm]{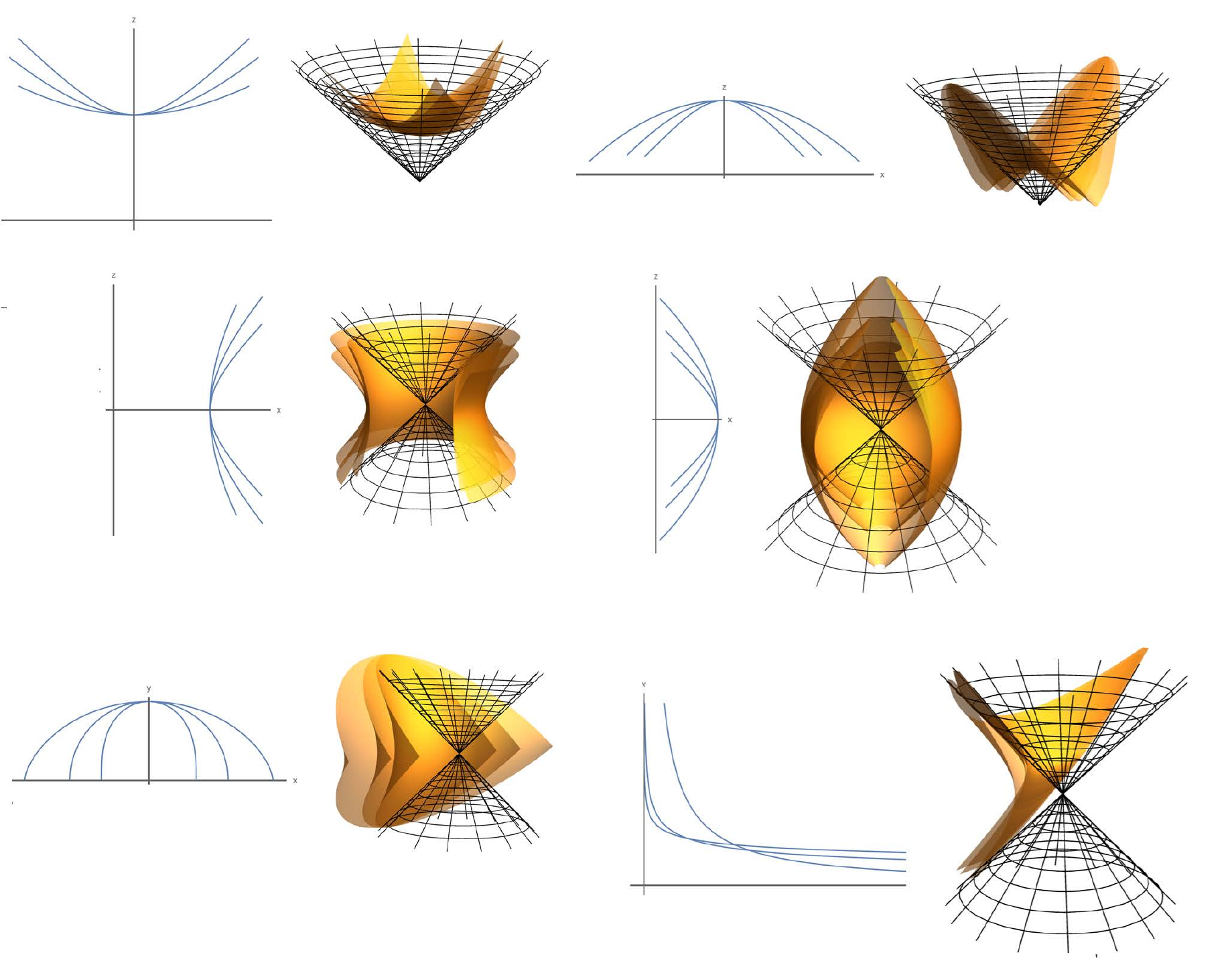}
		\caption{Some Lorentzian Hopf surfaces in $\L^3$ and their generatrix curves.
			First row: spacelike hyperbolic of first type, $q>0$ and $q<0$;
			second row: timelike elliptic $q>0$ and $q<0$; 
		third row: hyperbolic of second type, $q>0$, and timelike parabolic $q\neq 1/2$. }
		\label{fig:HopfLorentz}
	\end{center}
\end{figure}

Using Section \ref{SectMomentum}, Remark \ref{Re:K^2} and Remark \ref{Re:r}, it is a long straightforward computation to check that the linear geometric momentum of $\gamma_q^a$, $q\neq 0$, $a>0$, is given by
\begin{equation}\label{eq:K Hopf}
	\mathcal K (r)= \frac{r^q}{a^q}.
\end{equation}

In the next result, we determine that the rotational surfaces in $\L^3$ such that both principal curvatures are proportional at each point are precisely the Lorentzian Hopf surfaces introduced in Definition \ref{def:Hopf}.

\begin{theorem}\label{Th:Hopf LW}
	The only rotational surfaces satisfying the linear Weingarten relation $k_{\text m} = q\,  k_{\text p}$, $q\neq 0$, are the spacelike and timelike planes and the Lorentzian Hopf surfaces $S_{\gamma_q^a}^l$, $a>0$, described in Definition \ref{def:Hopf}.
\end{theorem}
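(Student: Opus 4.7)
The plan is to invoke Corollary \ref{cor:edoW} to reduce the Weingarten relation $k_{\text m} = q\, k_{\text p}$ to a first-order ODE for the geometric linear momentum $\mathcal K = \mathcal K(r)$, and then recover the generatrix curves via Corollary \ref{cor:Param}. Substituting \eqref{eq:kmkp}, the hypothesis becomes the separable linear ODE
\begin{equation*}
\mathcal K'(r) = \frac{q}{r}\, \mathcal K(r),
\end{equation*}
whose solutions are $\mathcal K \equiv 0$ together with $\mathcal K(r) = c\, r^q$ for a nonzero constant $c$. By Example \ref{ex:K0}, the zero momentum corresponds exactly to the spacelike and timelike planes listed in the statement. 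For the nontrivial solutions, after replacing $\gamma$ by its orientation-reversed curve if necessary, we may assume $c > 0$, so $c = 1/a^q$ for some $a > 0$, and $\mathcal K(r) = r^q/a^q$ is precisely the geometric linear momentum \eqref{eq:K Hopf} of the Hopf generatrices from Definition \ref{def:Hopf}.

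The remaining work is verification. For each of the four types of rotational surfaces listed in Corollary \ref{cor:Param}, I would substitute $\mathcal K(r) = r^q/a^q$ into the graph formula for the generatrix and reduce the resulting integral by the appropriate substitution: $z = a\cosh^{1/q} t$ (resp.\ $a\sinh^{1/q} t$) for the hyperbolic first type with $\epsilon = 1$ (resp.\ $\epsilon = -1$); $y = a\cos^{1/q} t$ for the hyperbolic second type; $x = a\sinh^{1/q} t$ (resp.\ $a\cosh^{1/q} t$) for the elliptic case with $\epsilon = 1$ (resp.\ $\epsilon = -1$); and, in the parabolic case, the direct primitive of $v^{-2q}$, which splits into the power-law and logarithmic branches depending on whether $q \neq 1/2$ or $q = 1/2$. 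In each instance the Pythagorean identity $\cosh^2 - \sinh^2 = 1$ or $\sin^2 + \cos^2 = 1$ cancels the square root in the integrand, and the resulting primitive is exactly the generatrix parametrization prescribed in Definition \ref{def:Hopf}. Uniqueness of the resulting rotational surface then follows from Theorem \ref{Th:Kdetermine}.

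The main difficulty is not conceptual but a careful case analysis: one must ensure that the sign of $\mathcal K$ is handled consistently across the eight sub-cases and, above all, that the admissibility domains $I$ specified in Corollary \ref{cor:Param} match the parameter ranges ($t \in (-\pi/2, \pi/2)$, $t > 0$, or $t \in \R$) prescribed in each part of Definition \ref{def:Hopf}. A mild subtlety appears in the parabolic case, where the exponent $1 - 2q$ of $v$ in the primitive of $v^{-2q}$ vanishes precisely when $q = 1/2$; this is the unique value for which integration produces a logarithm rather than a power, accounting for the dichotomy between the two branches in part (IV) of Definition \ref{def:Hopf}.
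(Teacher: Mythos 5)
Your proposal is correct and follows essentially the same route as the paper: both reduce the relation via \eqref{eq:kmkp} to the separable o.d.e.\ $\mathcal K'(r)=q\,\mathcal K(r)/r$, identify the trivial solution with the planes via Example \ref{ex:K0}, normalize the non-trivial solution to $\mathcal K(r)=r^q/a^q$, and conclude by Theorem \ref{Th:Kdetermine}. The case-by-case verification you outline (matching the momentum to the generatrices of Definition \ref{def:Hopf}, including the $q=1/2$ logarithmic branch in the parabolic case) is precisely the computation the paper carries out just before the theorem to establish \eqref{eq:K Hopf}.
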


\begin{proof}
	First, the spacelike and timelike plane trivially satisfies $k_{\text m} = q\,  k_{\text p}$, for any $q\neq 0$. Using \eqref{eq:K Hopf} in \eqref{eq:kmkp}, it is easy to check that $S_{\gamma_q^a}^l$ verifies precisely $k_{\text m} = q\,  k_{\text p}$.
	
	On the other hand, using Theorem \ref{Th:kmkp},
	the~linear Weingarten relation $k_{\text m}=q\,  k_{\text p}$ translates into the separable~o.d.e.
	$$\mathcal K ' (r)=q\, \mathcal K(r)/r. $$
	Its constant solution $\mathcal K \equiv 0$ leads to the spacelike and timelike planes (see Example \ref{ex:K0}).
	And its non-constant solution is given by $\mathcal K (r)= c \, r^q$, $c>0$. Taking $a=1/c^{1/q}$, we arrive at \eqref{eq:K Hopf} and then Theorem \ref{Th:Kdetermine} finishes the proof.
\end{proof}

\section{Quadratic Weingarten rotational surfaces in $\L^3$}\label{SectQuadraticWeingarten}

%See \cite{KS05}.

In this section, we are interested in the rotational Weingarten surfaces of $\L^3$ whose principal curvatures are related by the quadratic equation  $k_{\rm m}=\mu \, k_{\rm p}^2$, $\mu \neq 0$. Applying Theorem \ref{Th:kmkp} and taking into account \eqref{eq:kmkp}, we arrive at the differential equation
\begin{equation}\label{eq:OdeQuadratic}
	\mathcal{K}'(r)=\mu\, \mathcal{K}(r)^2/r^2,
\end{equation}
whose solutions determine the corresponding surfaces through Corollary \ref{cor:Param}.

Notice that the constant trivial solution $\mathcal{K}\equiv 0$ provides the spacelike and timelike planes in view of Example \ref{ex:K0}. The general solution of \eqref{eq:OdeQuadratic} is given by
\begin{equation}\label{eq:momemtum}
	\mathcal{K}_{\mu, c}(r)=\frac{r}{\mu+c r},\ c\in\mathbb R, \ \mu+c r\neq 0.
\end{equation}

Since $\mathcal{K}_{-\mu, -c}(r)=-\mathcal{K}_{\mu, c}(r)$, we may assume that $\mu>0$ in the following. If $c=0$, the geometric linear momentum \eqref{eq:momemtum} turns into $\mathcal{K}_{\mu,0}(r)=r/\mu$, and therefore the surface is locally congruent to the  hyperbolic plane $\H^2_+(\mu)$ if it is spacelike or to the de Sitter 2-space $\s^2_1(\mu)$ it is timelike (see Example \ref{ex:K=r/R}).

Using Corollary \ref{cor:Param}, we deduce that the desired rotational surfaces are generated by the following graphs:
\begin{enumerate}
	\item Hyperbolic of first type case:
	\begin{equation}
		\label{eq:graphH1}
		x=x_\epsilon^{\mu,c}(z)=  \int \dfrac{ z\, dz}{\sqrt{P_\epsilon^{\mu,c}(z)}}, \ P_\epsilon^{\mu,c}(z)=(1-\epsilon c^2)z^2-2\epsilon c \mu z-\epsilon \mu^2 >0 .
	\end{equation}
	\item Hyperbolic of second type case:
	\begin{equation}
		\label{eq:graphH2}
		x=x^{\mu,c}(y)=  \int \dfrac{ y\, dy}{\sqrt{Q^{\mu,c}(y)}}, \ Q^{\mu,c}(y)=(c^2-1)y^2+2c \mu y+ \mu^2>0.
	\end{equation}
	\item Elliptic case:
	\begin{equation}
		\label{eq:graphE}
		z=z_\epsilon^{\mu,c}(x)=  \int \dfrac{ x\, dx}{\sqrt{R_\epsilon^{\mu,c}(x)}}, \ R_\epsilon^{\mu,c}(x)=(1+\epsilon c^2)x^2+2\epsilon c \mu x+\epsilon \mu^2 >0.
	\end{equation}
	\item Parabolic case:
	\begin{equation}
		\label{eq:graphP}
		u=u^{\mu,c}_\epsilon(v)= \epsilon \int \dfrac{(\mu +cv)^2}{v^2}dv, \ v>0.
	\end{equation}
\end{enumerate}

We point out that $x_\epsilon^{\mu,-c}(-z)=x_\epsilon^{\mu,c}(z)$, $x^{\mu,-c}(-y)=x^{\mu,c}(y)$ and $z_\epsilon^{\mu,-c}(-x)=z_\epsilon^{\mu,c}(x)$; hence we may also assume that $c > 0$ in \eqref{eq:graphH1}, \eqref{eq:graphH2} and \eqref{eq:graphE}.
Furthermore, $R_\epsilon^{\mu,c}(x)=P_{-\epsilon}^{\mu,c}(x)$ and hence $z_{\epsilon}^{\mu,c}(x)=x_{-\epsilon}^{\mu,c}(x)$.

A long but straightforward computation provides the following result.
\begin{proposition}\label{lem:graphs}
	The explicit expressions of the graphs described in equations \eqref{eq:graphH1}, \eqref{eq:graphH2}, \eqref{eq:graphE}, and \eqref{eq:graphP}, depending on $c > 0$ and considering $\mu>0$, are depicted in the following list:
	\begin{enumerate}[(1)]
		\item Hyperbolic of first type:
		\begin{enumerate}
			\item Spacelike generatrix, $\epsilon=1$:
			\begin{enumerate}[(i)]
				\item If $0<c<1$:  %$$x_1^{\mu,c}(z)=\frac{\sqrt{1 - c^2} \sqrt{P_1^{\mu,c}(z)} + c \mu \log\left(z + \frac{c \mu}{ c^2-1} + \frac{\sqrt{P_1^{\mu,c}(z)}}{\sqrt{1 - c^2}}\right)-c \mu \log\left(\frac{\mu}{c^2-1}\right)}{(1 - c^2)^{3/2}}.$$
				$$x_1^{\mu,c}(z) = \frac{\sqrt{P_1^{\mu,c}(z)}}{1 - c^2} + \frac{c \mu}{(1 - c^2)^{3/2}}
				%\left(
				\log\left(z + \frac{c \mu}{ c^2-1} + \frac{\sqrt{P_1^{\mu,c}(z)}}{\sqrt{1 - c^2}}\right)
				%- \log\left(\frac{\mu}{c^2-1}\right)
				%\right)
				, \text{ with $z \leq\frac{-\mu}{1+c}$ or $z\geq \frac{-\mu}{-1+c}$.}$$
				%with $z \leq\frac{-\mu}{1+c}$ or $z\geq \frac{-\mu}{-1+c}$. \vspace{0.1cm}
				\item If $c=1$: $$\displaystyle x_{1}^{\mu,1}(z)=\frac{(-z + \mu) \sqrt{-\mu (2 z + \mu)}}{3 \mu}, \text{ with } z\leq \mu/2.$$
				\item If $c>1$: $$\displaystyle x_1^{\mu,c}(z)= -\frac{\sqrt{P_1^{\mu,c}(z)}}{c^2 - 1} + \frac{c \mu \arccos\left( \frac{(c^2 - 1) z}{\mu}+c\right)}{(c^2 - 1)^{3/2}}, \text{ with } \frac{-\mu}{1+c}\leq z\leq \frac{-\mu}{-1+c}.$$
			\end{enumerate}
			\item Timelike generatrix, $\epsilon=-1$:
			\[
			x_{-1}^{\mu,c}(z)= \frac{\sqrt{P_{-1}^{\mu,c}(z)}}{1 + c^2} - \frac{c \mu}{(1 + c^2)^{3/2}} \log\left(z + \frac{c \mu}{1 + c^2} + \frac{\sqrt{P_{-1}^{\mu,c}(z)}}{\sqrt{1 + c^2}}\right), \ z\in\R.
			\]
		\end{enumerate}
		\item Hyperbolic of second type:
		\begin{enumerate}[(i)]
			\item If $0<c<1$: 
			$$ x^{\mu,c}(y)=\frac{\sqrt{Q^{\mu,c}(y)}}{c^2 - 1} + \frac{c \mu \left(\pi - \arccos\left(\frac{(1 - c^2)y}{\mu} - c\right)\right)}{(1 - c^2)^{3/2}},\text{ with }  \frac{-\mu}{1+c}\leq y\leq \frac{-\mu}{-1+c}.$$
			\item If $c=1$: 
			$$\displaystyle x^{\mu,1}(y)= \frac{(y - \mu) \sqrt{\mu (2 y + \mu)}}{3 \mu}, \text{ with }  y\geq\mu/2.$$
			\item If $c>1$: 
			$$ x^{\mu,c}(y)=  \frac{\sqrt{Q^{\mu,c}(y)}}{c^2 - 1} - \frac{c \mu}{(c^2 - 1)^{3/2}}
			%\left(
			%\log\left(-\frac{\mu}{c^2 - 1}\right) 
			\log\left(y + \frac{c \mu}{c^2 - 1} + \frac{\sqrt{Q^{\mu,c}(y)}}{\sqrt{c^2 - 1}}\right)
			%\right)
			, \text{ with $ y \leq\frac{-\mu}{1+c} $ or $ y\geq \frac{-\mu}{-1+c}$.}$$
			%with $ y \leq\frac{-\mu}{1+c} $ or $ y\geq \frac{-\mu}{-1+c}$. \vspace{0.1cm}
		\end{enumerate}
		\item Elliptic: $z_{\epsilon}^{\mu,c}(x)=x_{-\epsilon}^{\mu,c}(x)$, $\epsilon =\pm 1$, with $x$ varying as in case (1).
		%	\begin{enumerate}
			%		\item Spacelike generatrix curve, $\epsilon=1$: $z_1^{\mu,c}(x)=x_{-1}^{\mu,c}(x)$.
			%		\item Timelike generatrix curve, $\epsilon=-1$:
			%		\begin{enumerate}[(i)]
				%			\item Case $0<c<1$:
				%			\item Case $c=1$: 
				%			\item Case $c>1$:
				%		\end{enumerate}
			%	\end{enumerate}
		\item Parabolic: $\displaystyle u_{\epsilon}^{\mu,c}(v)=\epsilon \left(c^2 v - \frac{\mu^2}{v} + 2 c \mu \log(v)\right)$, $\epsilon =\pm 1$, $v>0$.
	\end{enumerate}
\end{proposition}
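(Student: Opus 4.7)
The strategy is direct integration of each of the four integrals \eqref{eq:graphH1}--\eqref{eq:graphP}, followed by a case-by-case analysis of the sign of the leading coefficient of the polynomial under the square root. The parabolic case is immediate: expanding $(\mu+cv)^{2}/v^{2}=c^{2}+2c\mu/v+\mu^{2}/v^{2}$ and integrating term by term yields the formula in part (4). The remaining three families fit the classical template
\begin{equation*}
\int \frac{r\,dr}{\sqrt{Ar^{2}+Br+C}}=\frac{\sqrt{Ar^{2}+Br+C}}{A}-\frac{B}{2A}\int \frac{dr}{\sqrt{Ar^{2}+Br+C}},\qquad A\neq 0,
\end{equation*}
obtained from the decomposition $r=\frac{1}{2A}(2Ar+B)-\frac{B}{2A}$; the remaining antiderivative $\int dr/\sqrt{Ar^{2}+Br+C}$ is standard and elementary.

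The case analysis is driven by the sign of $A$: when $A>0$, completing the square yields a logarithm of the form $\frac{1}{\sqrt{A}}\log\bigl(2Ar+B+2\sqrt{A}\sqrt{Ar^{2}+Br+C}\bigr)$; when $A<0$, one obtains an arccosine, chosen with an appropriate branch and additive constant so that the limit $c\to 0$ recovers the umbilical formula of Example \ref{ex:K=r/R}; when $A=0$, the quadratic degenerates to a linear polynomial and the integral is elementary, producing a rational expression in the square-root of a linear term. For \eqref{eq:graphH1} with timelike generatrix, $P_{-1}^{\mu,c}$ has leading coefficient $1+c^{2}>0$ and negative discriminant, so only the logarithmic branch arises and the natural domain is $z\in \R$. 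For the spacelike sub-case, the leading coefficient of $P_{1}^{\mu,c}$ is $1-c^{2}$, and the roots $z=-\mu/(1+c)$ and $z=\mu/(1-c)$ split the analysis into $0<c<1$ (logarithm, $z$ outside the roots), $c=1$ (degenerate linear polynomial $P_{1}^{\mu,1}(z)=-\mu(2z+\mu)$, yielding the rational closed form stated), and $c>1$ (arccosine, $z$ between the roots). The hyperbolic second-type family \eqref{eq:graphH2} is handled identically, noting that the leading coefficient of $Q^{\mu,c}$ is $c^{2}-1$, which simply swaps the roles of the $0<c<1$ and $c>1$ sub-cases relative to the previous family. The elliptic case \eqref{eq:graphE} is then free of charge: the identity $R_{\epsilon}^{\mu,c}(x)=P_{-\epsilon}^{\mu,c}(x)$ noted just after \eqref{eq:graphP} gives $z_{\epsilon}^{\mu,c}(x)=x_{-\epsilon}^{\mu,c}(x)$, so part (3) follows directly from part (1).

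The main technical obstacle is not the integration itself, which is elementary, but the bookkeeping of three ingredients: the correct branch of the logarithm or arccosine, the additive constant that ensures compatibility with the normalizations used in the previous sections, and the precise identification of the domain of $r$ on which the polynomial under the radical is positive. This last point requires locating the real roots of the quadratic and tracking how their order on the real line depends on $c$, and it is the source of the disjunction ``$z\leq -\mu/(1+c)$ or $z\geq -\mu/(-1+c)$'' versus the bounded interval that appears in the $c>1$ cases. Once these three ingredients are pinned down and the standard antiderivatives substituted back, only routine algebraic simplification remains.
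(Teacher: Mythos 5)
Your proposal is correct and follows exactly the route the paper intends: the authors give no argument beyond ``a long but straightforward computation,'' and your decomposition $r=\frac{1}{2A}(2Ar+B)-\frac{B}{2A}$, the case split on the sign of the leading coefficient (logarithm for $A>0$, arccosine for $A<0$, degenerate linear radicand for $A=0$), the reduction of the elliptic case via $R_{\epsilon}^{\mu,c}=P_{-\epsilon}^{\mu,c}$, and the termwise integration in the parabolic case reproduce all the stated formulas. No discrepancy with the paper's (implicit) proof.
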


Therefore, as a consequence of Corollary \ref{cor:edoW} and Corollary \ref{cor:Param}, we have proved the following local classification result for quadratic rotational Weingarten surfaces:
\begin{theorem}\label{Th:Wquadratic}
	The rotational surfaces in $\L^3$ whose principal curvatures satisfy $k_m=\mu \,  k_p^2$, $\mu\neq0$, are locally congruent to the spacelike and timelike planes, the hyperbolic plane and De Sitter 2-space of radius $|\mu|$, and the rotational surfaces generated by the graphs described in Proposition \ref{lem:graphs}.
\end{theorem}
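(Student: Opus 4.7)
The plan is to follow exactly the momentum-reduction machinery developed up to Corollary \ref{cor:edoW}, using the cubic-Weingarten paradigm as a guide. First, I would invoke Theorem \ref{Th:kmkp}, which expresses the principal curvatures via $k_{\rm m} = \mathcal{K}'(r)$ and $k_{\rm p} = \mathcal{K}(r)/r$. Substituting these into the prescribed relation $k_{\rm m} = \mu\, k_{\rm p}^2$ transforms the Weingarten condition on $S_\gamma^l$ into the separable first-order ODE
\begin{equation*}
\mathcal{K}'(r) = \mu\,\mathcal{K}(r)^2/r^2,
\end{equation*}
regardless of the causal character of the axis of revolution.

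Next, I would integrate this ODE explicitly. The constant solution $\mathcal{K} \equiv 0$ is recognized, via Example \ref{ex:K0}, as producing the spacelike plane $x_3 = z_0$ and the timelike plane $x_1 = x_0$. The non-trivial general solution can be computed as $\mathcal{K}_{\mu,c}(r) = r/(\mu + cr)$ for an arbitrary constant $c \in \R$, provided $\mu + cr \neq 0$. The symmetry $\mathcal{K}_{-\mu,-c} = -\mathcal{K}_{\mu,c}$ lets us restrict to $\mu > 0$. The special value $c = 0$ yields $\mathcal{K}(r) = r/\mu$, which by Example \ref{ex:K=r/R} uniquely identifies (up to congruence) the hyperbolic plane $\H^2_+(\mu)$ in the spacelike situation and the de Sitter space $\s^2_1(\mu)$ in the timelike situation.

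For the remaining range $c \neq 0$, I would apply each item of Corollary \ref{cor:Param} to the momentum $\mathcal{K}_{\mu,c}$. In the hyperbolic-of-first-type case, \eqref{eq:xez} gives the graph \eqref{eq:graphH1} with radicand $P_\epsilon^{\mu,c}(z) = \mathcal{K}_{\mu,c}(z)^2 - \epsilon$ (after clearing denominators), and analogously \eqref{eq:xey}, \eqref{eq:zex}, and \eqref{eq:uev} produce the quadratures \eqref{eq:graphH2}, \eqref{eq:graphE}, and \eqref{eq:graphP}. The symmetry $\mathcal{K}_{\mu,-c}(r)$ under $r \mapsto -r$ permits the further reduction $c > 0$. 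The elliptic case is handled by observing $R_\epsilon^{\mu,c}(x) = P_{-\epsilon}^{\mu,c}(x)$, so $z_\epsilon^{\mu,c}(x) = x_{-\epsilon}^{\mu,c}(x)$, avoiding duplicated work.

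The main obstacle is purely computational: the integrals defining $x_\epsilon^{\mu,c}$, $x^{\mu,c}$, $z_\epsilon^{\mu,c}$, and $u_\epsilon^{\mu,c}$ require a delicate case analysis on the sign of the leading coefficient of the quadratics $P_\epsilon^{\mu,c}$ and $Q^{\mu,c}$. Concretely, for the spacelike hyperbolic-of-first-type surfaces one must split according to whether $1 - c^2$ is positive, zero, or negative, producing respectively a logarithmic antiderivative, an algebraic antiderivative, and an $\arccos$-type antiderivative, together with the corresponding domain of definition dictated by the positivity of the radicand; the same trichotomy appears for the hyperbolic-of-second-type case. The timelike case $\epsilon = -1$ in the first family (and the parabolic case) is uniform in $c$. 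Completing these integrations yields precisely the list recorded in Proposition \ref{lem:graphs}, and collecting the trivial, umbilical, and non-trivial solutions through Theorem \ref{Th:Kdetermine} gives the classification.
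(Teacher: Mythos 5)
Your proposal is correct and follows essentially the same route as the paper: reduce $k_{\rm m}=\mu\,k_{\rm p}^2$ via Theorem \ref{Th:kmkp} to $\mathcal K'=\mu\mathcal K^2/r^2$, solve it as $\mathcal K_{\mu,c}(r)=r/(\mu+cr)$, peel off the trivial ($\mathcal K\equiv 0$) and umbilical ($c=0$) solutions, use the symmetries to normalize $\mu>0$, $c>0$ and the identity $R_\epsilon^{\mu,c}=P_{-\epsilon}^{\mu,c}$ to fold the elliptic case into the hyperbolic one, and then integrate the resulting quadratures case by case on the sign of the leading coefficient, exactly as in Proposition \ref{lem:graphs}. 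The only minor imprecision is the remark that ``the same trichotomy appears'' for the second-type hyperbolic case: the trichotomy is indeed on $c^2-1$, but there the roles of the logarithmic and $\arccos$-type antiderivatives are interchanged relative to the first-type spacelike case, since the leading coefficient of $Q^{\mu,c}$ is $c^2-1$ rather than $1-c^2$.
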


\begin{figure}[h]
	\begin{center}
			\includegraphics[height=8cm]{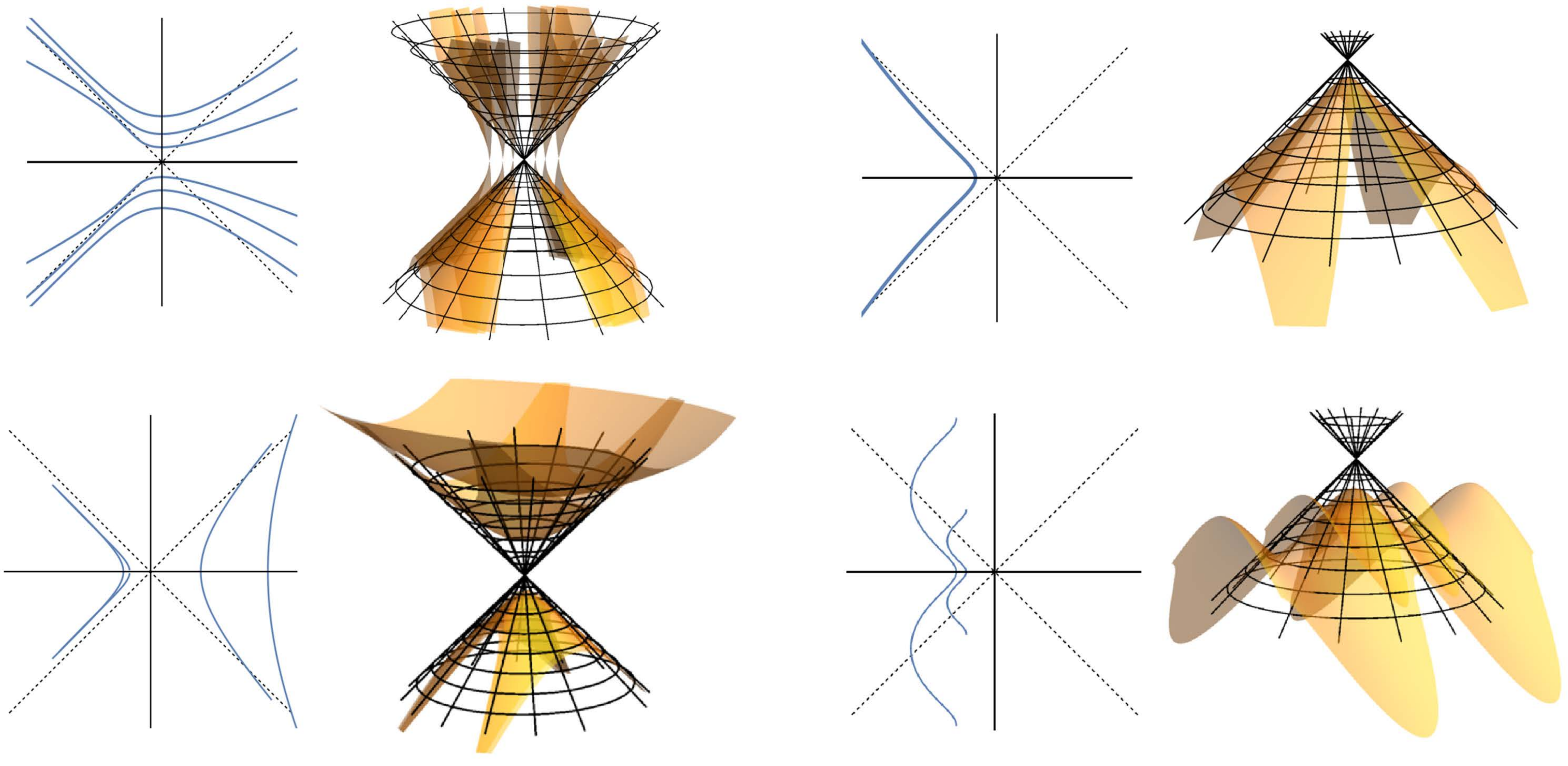} \hspace{1cm}
		\end{center}
	\caption{Curves $x_\epsilon^{\mu,c}(z)$ and the corresponding rotational hyperbolic surfaces of first type for $\mu=1$, $\epsilon=1$ and $0<c<1$ (top-left), $c=1$ (top-right), $c>1$ (down-left); and $\epsilon=-1$ and some $c>0$ (down-right). } \label{fig:H1}
\end{figure}

\begin{figure}[h]
	\begin{center}
		\includegraphics[height=8cm]{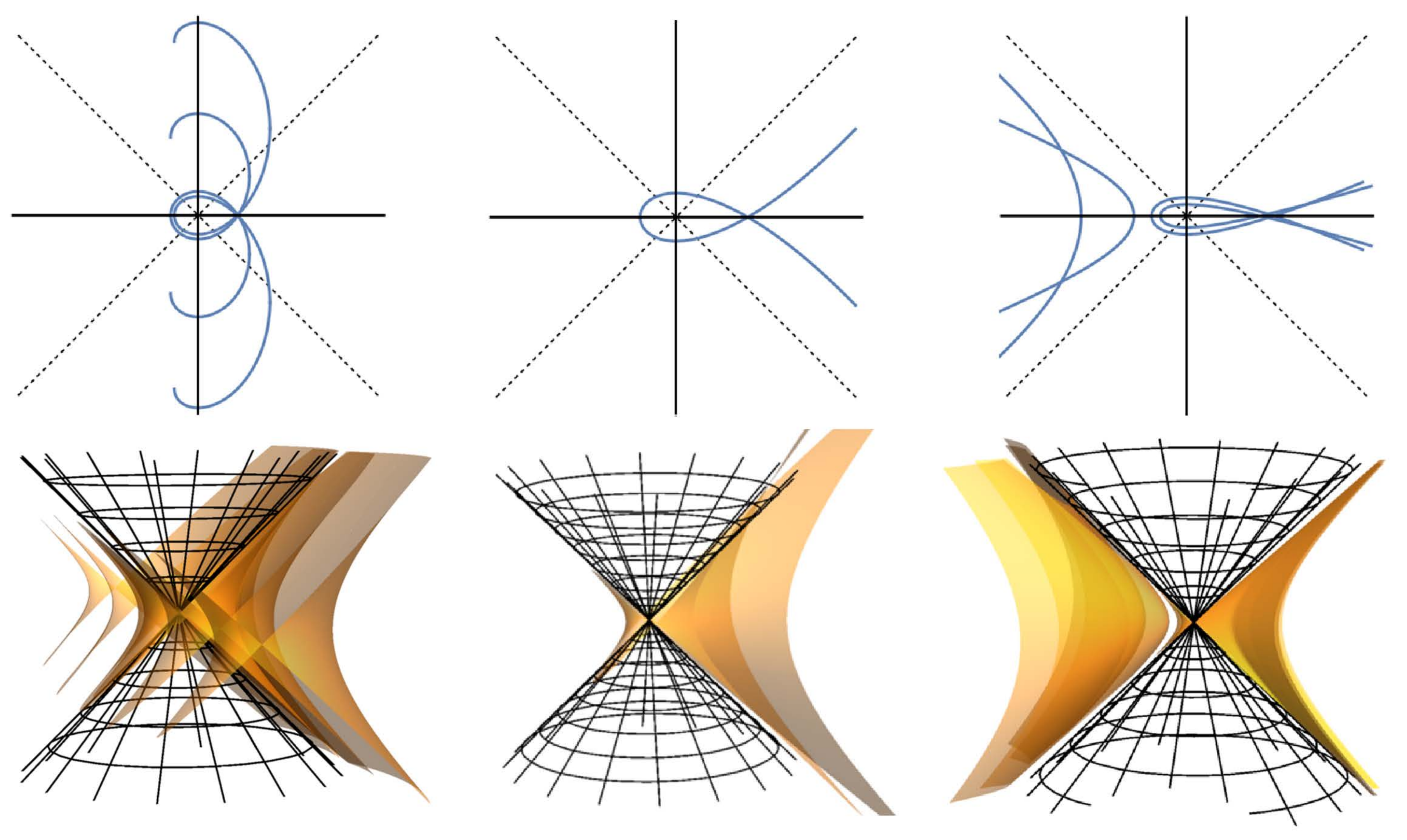} \hspace{1cm}
	\end{center}
	\caption{Curves $x^{\mu,c}(y)$ and the corresponding rotational hyperbolic surfaces of second type for $\mu=1$, and $0<c<1$ (left), $c=1$ (middle), and $c>1$ (right).} \label{fig:H2}
\end{figure}

\begin{figure}[h]
	\begin{center}
		\includegraphics[height=8cm]{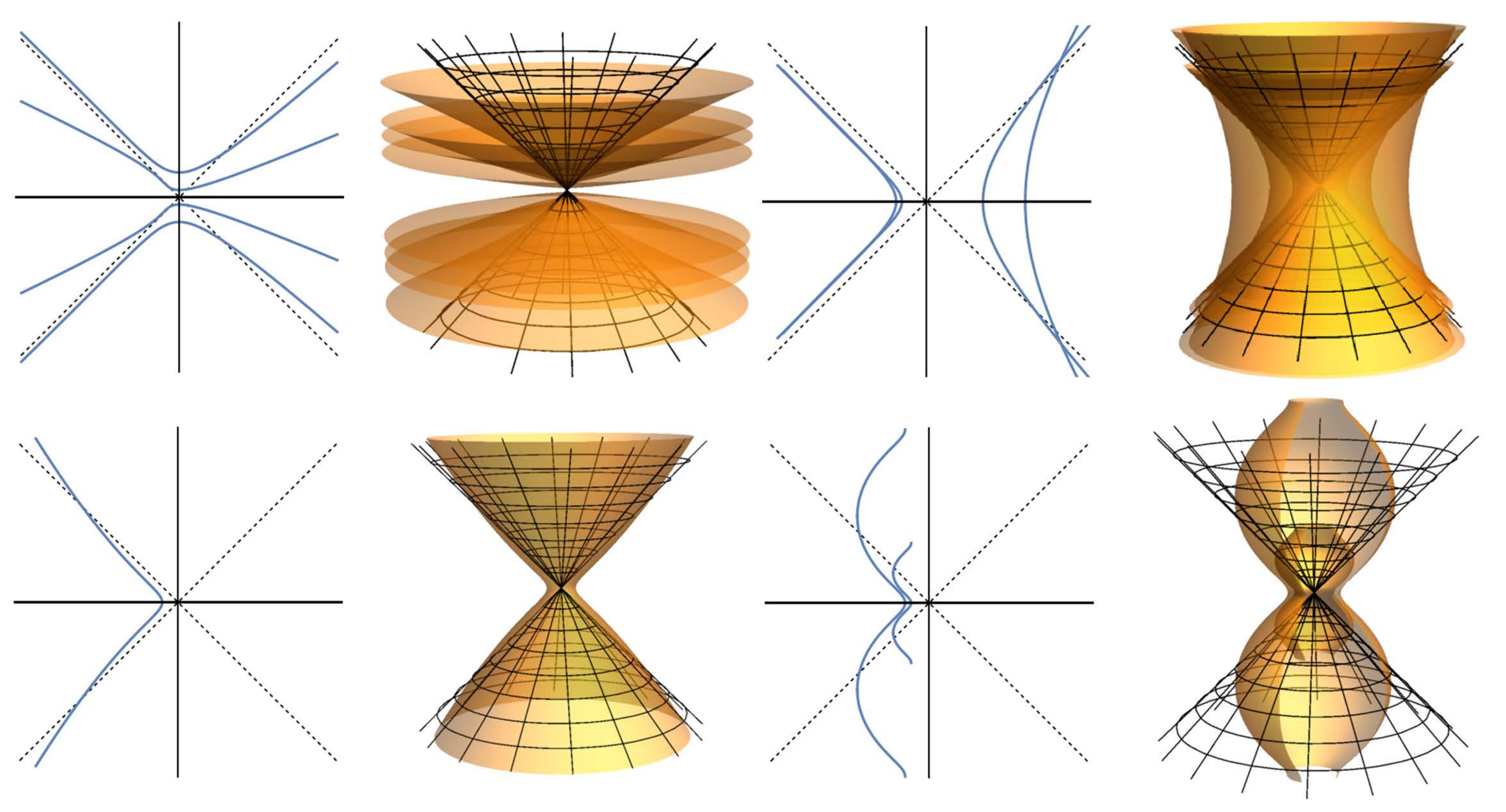} \hspace{1cm}
	\end{center}
	\caption{Curves $z_\epsilon^{\mu,c}(x)$ and the corresponding rotational elliptic surfaces for $\mu=1$, $\epsilon=1$ and some $c>0$ (top-left), $\epsilon=-1$ and $0<c<1$ (top-right), $c=1$ (down-left), and $c>1$ (down-right).} \label{fig:E}
\end{figure}

\begin{figure}[h]
	\begin{center}
		\includegraphics[height=4cm]{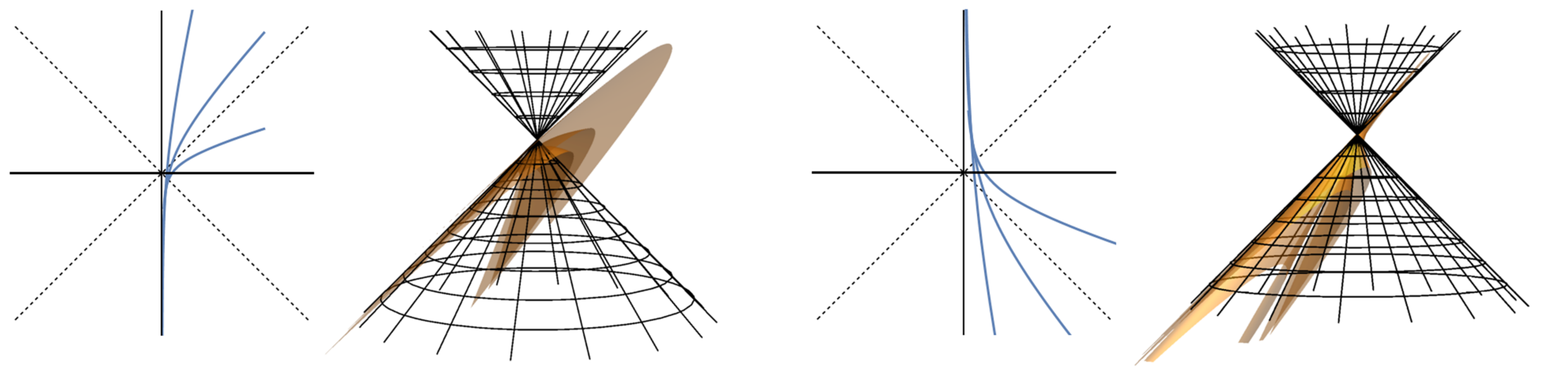} \hspace{1cm}
	\end{center}
	\caption{Curves $u_\epsilon^{\mu,c}(v)$ and the corresponding rotational parabolic surfaces for  $\mu=1$, $\epsilon= 1$ (left) or $\epsilon=-1$ (right), and some values of $c>0$.} \label{fig:P}
\end{figure}

%%%%%%%%%%%%%%%%%%%%%%%%%%%%%%%%%%%%%%%%%%%%%%%%%%%%%%%%%%%%%%%%%%%%%%%%%%%%%%%%%%%%%%%%%%%%

\section{Cubic Weingarten rotational surfaces in $\L^3$}\label{SectLCubicWeingarten}

The aim of this section is to study the rotational surfaces in $\L^3$ satisfying the cubic Weingarten equation $k_{\text m}=\mu\,  k_{\text p}^3$, with $\mu \neq 0$.

\subsection{Non-degenerate quadric surfaces of revolution in $\mathbb L^3$}\label{SectQuadric}

 Inspired by \cite{CC22} and \cite{CC23}, we look first for the Lorentzian versions of the non-degenerate quadric surfaces of revolution in $\mathbb E^3$. They can be rewritten in terms of their circular orbits in the following way:
\begin{enumerate}
	\item ellipsoid of revolution $\frac{x^2+y^2}{a^2}+\frac{z^2}{b^2}=1$: $$x^2+y^2=\alpha^2-\beta^2 \, z^2, $$
	\item one-sheet hyperboloid of revolution $\frac{x^2+y^2}{a^2}-\frac{z^2}{b^2}=1$: $$x^2+y^2=\alpha^2+\beta^2 \, z^2, $$
	\item two-sheets hyperboloid of revolution $-\frac{x^2+y^2}{a^2}+\frac{z^2}{b^2}=1$: $$ x^2+y^2=-\alpha^2+\beta^2 \, z^2, $$
	\item paraboloid of revolution $z=\frac{x^2+y^2}{2a}$: $$x^2+y^2=\delta \, z, $$
\end{enumerate}
where $a,b>0$ and $\alpha=a>0$, $\beta=a/b>0$, $\delta=2a>0$.

As far as we know, there is hardly any literature about the non-degenerate quadric surfaces of revolution in $\mathbb L^3$. It seems natural then to follow the above idea in order to define the \textit{non-degenerate quadric surfaces of revolution} in $\mathbb L^3$  taking into account the possible nature of the orbits in the three types of rotations in $\L^3$. If we also introduce the parameters $a=\alpha >0$, $b=\alpha / \beta >0$, $d=\delta /2 >0$, we finally get the following canonical equations:
\begin{enumerate}
	\item[\bf (I)] \textit{Hyperbolic of first type quadric surfaces of revolution:} %$ x_3^2-x_2^2=z_\epsilon (x_1)^2$, $\epsilon = \pm 1$.
	\begin{enumerate}
		\item Lorentzian ellipsoid: $x_3^2-x_2^2=\alpha^2-\beta^2 \,  x_1^2$, i.e.\
		$$\frac{x_1^2}{b^2}-\frac{x_2^2}{a^2}+\frac{x_3^2}{a^2}=1, $$
		\item Lorentzian one-sheet hyperboloid: $x_3^2-x_2^2=\alpha^2+\beta^2 \, x_1^2$, i.e.\
		$$-\frac{x_1^2}{b^2}-\frac{x_2^2}{a^2}+\frac{x_3^2}{a^2}=1, $$
		\item Lorentzian two-sheets hyperboloid: $x_3^2-x_2^2=-\alpha^2+\beta^2\,  x_1^2$, i.e.\
		$$\frac{x_1^2}{b^2}+\frac{x_2^2}{a^2}-\frac{x_3^2}{a^2}=1, $$
		\item Lorentzian paraboloid: $x_3^2-x_2^2=\delta \, x_1$, i.e.\
		$$-x_2^2+x_3^2=2d \, x_1. $$
	\end{enumerate}
	\item[\bf (II)] \textit{Hyperbolic of second type quadric surfaces of revolution:} %$ x_2^2-x_3^2=y (x_1)^2$ (always timelike).
	\begin{enumerate}
		\item Lorentzian ellipsoid: $x_2^2-x_3^2=\alpha^2-\beta^2 \,  x_1^2$, i.e.\
		$$\frac{x_1^2}{b^2}+\frac{x_2^2}{a^2}-\frac{x_3^2}{a^2}=1, $$
		\item Lorentzian one-sheet hyperboloid: $x_2^2-x_3^2=\alpha^2+\beta^2 \, x_1^2$, i.e.\
		$$-\frac{x_1^2}{b^2}+\frac{x_2^2}{a^2}-\frac{x_3^2}{a^2}=1, $$
		\item Lorentzian two-sheets hyperboloid: $x_2^2-x_3^2=-\alpha^2+\beta^2\,  x_1^2$, i.e.\
		$$\frac{x_1^2}{b^2}-\frac{x_2^2}{a^2}+\frac{x_3^2}{a^2}=1, $$ 
		\item Lorentzian paraboloid: $x_2^2-x_3^2=\delta \, x_1$, i.e.\
		$$ x_2^2-x_3^2=2d \, x_1 .$$
	\end{enumerate}
	\item[\bf (III)] \textit{Elliptic quadric surfaces of revolution:} %$ x_1^2+x_2^2=x_\epsilon (x_3)^2$, $\epsilon = \pm 1$.
	\begin{enumerate}
		\item Lorentzian ellipsoid: $x_1^2+x_2^2=\alpha^2-\beta^2 \,  x_3^2$, i.e.\
		$$\frac{x_1^2}{a^2}+\frac{x_2^2}{a^2}+\frac{x_3^2}{b^2}=1, $$
		\item Lorentzian one-sheet hyperboloid: $x_1^2+x_2^2=\alpha^2+\beta^2 \, x_3^2$, i.e.\
	 $$\frac{x_1^2}{a^2}+\frac{x_2^2}{a^2}-\frac{x_3^2}{b^2}=1, $$
		\item Lorentzian two-sheets hyperboloid: $x_1^2+x_2^2=-\alpha^2+\beta^2\,  x_3^2$, i.e.\
		 $$-\frac{x_1^2}{a^2}-\frac{x_2^2}{a^2}+\frac{x_3^2}{b^2}=1, $$
		\item Lorentzian paraboloid: $x_1^2+x_2^2=\delta \, x_3$, i.e.\
		$$x_1^2+x_2^2=2 d \, x_3.$$
	\end{enumerate}
	\item[\bf (IV)] \textit{Parabolic quadric surfaces of revolution:} %$ x_1^2+x_2^2-x_3^2=(x_1-x_3)u_\epsilon(x_1-x_3)$, $\epsilon = \pm 1$.
	\begin{enumerate}
		\item  $x_1^2+x_2^2-x_3^2=\alpha^2-\beta^2 \,  (x_1-x_3)^2$, i.e.\
		$$ \frac{x_1^2+x_2^2-x_3^2}{\alpha^2}+\frac{\beta^2(x_1-x_3)^2}{\alpha^2}=1, $$
		\item  $x_1^2+x_2^2-x_3^2=\alpha^2+\beta^2 \, (x_1-x_3)^2$, i.e.\
		$$ \frac{x_1^2+x_2^2-x_3^2}{\alpha^2}-\frac{\beta^2(x_1-x_3)^2}{\alpha^2  }=1, $$
		\item  $x_1^2+x_2^2-x_3^2=-\alpha^2+\beta^2\,  (x_1-x_3)^2$, i.e.\
		$$ -\frac{x_1^2+x_2^2-x_3^2}{\alpha^2}+\frac{\beta^2(x_1-x_3)^2}{\alpha^2 }=1, $$
		\item  $x_1^2+x_2^2-x_3^2=-\alpha^2-\beta^2\,  (x_1-x_3)^2$, i.e.\
		$$ -\frac{x_1^2+x_2^2-x_3^2}{\alpha^2}-\frac{\beta^2(x_1-x_3)^2}{\alpha^2}=1, $$
	\end{enumerate}
	with $\alpha >0$, $\beta \geq  0$.
\end{enumerate}
The latter case (IV) allows for a different configuration of the radii functions. In fact, it makes no sense to consider case (IV)(d) as the expected case $x_1^2+x_2^2-x_3^2=\delta (x_1-x_3)$, because then \eqref{eq:Imp_u} leads to $u_\epsilon (v) = \delta >0$ and this contradicts \eqref{eq:K4}.

\begin{remark}\label{rm:coincidence}
With the above nomenclature, it is obvious that the canonical equations of cases (I)-(a) and (II)-(c) coincide.
The same happens to cases (I)-(c) and (II)-(a). But recall that the surfaces belonging to (II) are always timelike.
So the coincidence should be understood only with the timelike region of the hyperbolic of first type Lorentzian ellipsoid and Lorentzian two-sheets hyperboloid.
\end{remark}

\begin{remark}\label{rm:umb_quadric}
We point out that the hyperbolic plane $\H^2_+(R)$, $R>0$, can be seen as a  non-degenerate quadric surface of revolution; it corresponds to $a=b=R$ in cases (I)-(b) and (III)-(c), and to $\beta =0$, $\alpha=R$, in cases (IV)-(c)(d).
The same happens to the de Sitter 2-space $\s^2_1(R)$, $R>0$; it corresponds to $a=b=R$ in cases (I)-(c), (II)-(a) and (III)-(b), and to $\beta =0$, $\alpha=R$, in cases (IV)-(a)(b). 
\end{remark}

Using Remark \ref{Re:K^2}, it is a long straightforward exercise to arrive at the explicit expression of the geometric linear momentum of the generatrix curve (ellipse, hyperbola or parabola) of each non-degenerate quadric surface of revolution in $\L^3$ defined above. In addition, using Theorem \ref{Th:kmkp}, it is then easy to check that all of them satisfy a cubic Weingarten relation collected in the following list and we can also specify the spacelike and timelike regions of each non-degenerate quadric surface of revolution in $\L^3$:

\begin{enumerate}
	\item[\bf (I)] \textit{Hyperbolic of first type quadric surfaces of revolution:} 
	\begin{enumerate}
		\item Lorentzian ellipsoid: generatrix ellipse $x^2/b^2+z^2/a^2=1$,
		$$\mathcal K (z)^2=\frac{\epsilon \, b^2 z^2}{(a^2+b^2)z^2-a^4}, \quad
		 k_{\rm m}=-\epsilon \, \frac{a^4}{b^2}k_{\rm p}^3.$$
		 $$ {\rm Spacelike \ (resp.\ timelike)\ if \ } z^2 > \frac{a^4}{a^2+b^2} \,\left({\rm resp.\ if \  } z^2 < \frac{a^4}{a^2+b^2}\right).$$
		\item Lorentzian one-sheet hyperboloid: generatrix hyperbola $-x^2/b^2+z^2/a^2=1$,
		$$\mathcal K (z)^2=\frac{\epsilon \, b^2 z^2}{(b^2-a^2)z^2+a^4}, \quad
		k_{\rm m}=\epsilon \, \frac{a^4}{b^2}k_{\rm p}^3.$$
		 $$ {\rm Spacelike \ (resp.\ timelike)\ if \ } z^2 < \frac{a^4}{a^2-b^2} \,\left({\rm resp.\ if \  } z^2 > \frac{a^4}{a^2-b^2}\right), \, a^2>b^2.$$
		 When $a^2 \leq b^2$, it is always spacelike.
		\item Lorentzian two-sheets hyperboloid: generatrix hyperbola $x^2/b^2-z^2/a^2=1$,
	$$\mathcal K (z)^2=\frac{\epsilon \, b^2 z^2}{(b^2-a^2)z^2-a^4}, \quad
	k_{\rm m}=-\epsilon \, \frac{a^4}{b^2}k_{\rm p}^3.$$
	 $$ {\rm Spacelike \ (resp.\ timelike)\ if \ } z^2 > \frac{a^4}{b^2-a^2} \,\left({\rm resp.\ if \  } z^2 < \frac{a^4}{b^2-a^2}\right),  \, b^2>a^2.$$
	 When $b^2 \leq a^2$, it is always timelike.
		\item Lorentzian paraboloid: generatrix parabola $z^2=2d\,x$,
		$$\mathcal K (z)^2=\frac{\epsilon \,  z^2}{z^2-d^2}. \quad
		k_{\rm m}=-\epsilon \, d^2 k_{\rm p}^3,$$
		 $$ {\rm Spacelike \ (resp.\ timelike)\ if \ } |z| > d \,({\rm resp.\ if \  } |z|<d).$$
	\end{enumerate}
 Recall that $a >0$, $b >0$, $d >0$.
 See Figure \ref{fig:HyperIquadrics}.
 \begin{figure}[h]
 	\begin{center}
 		\includegraphics[width=14.5cm]{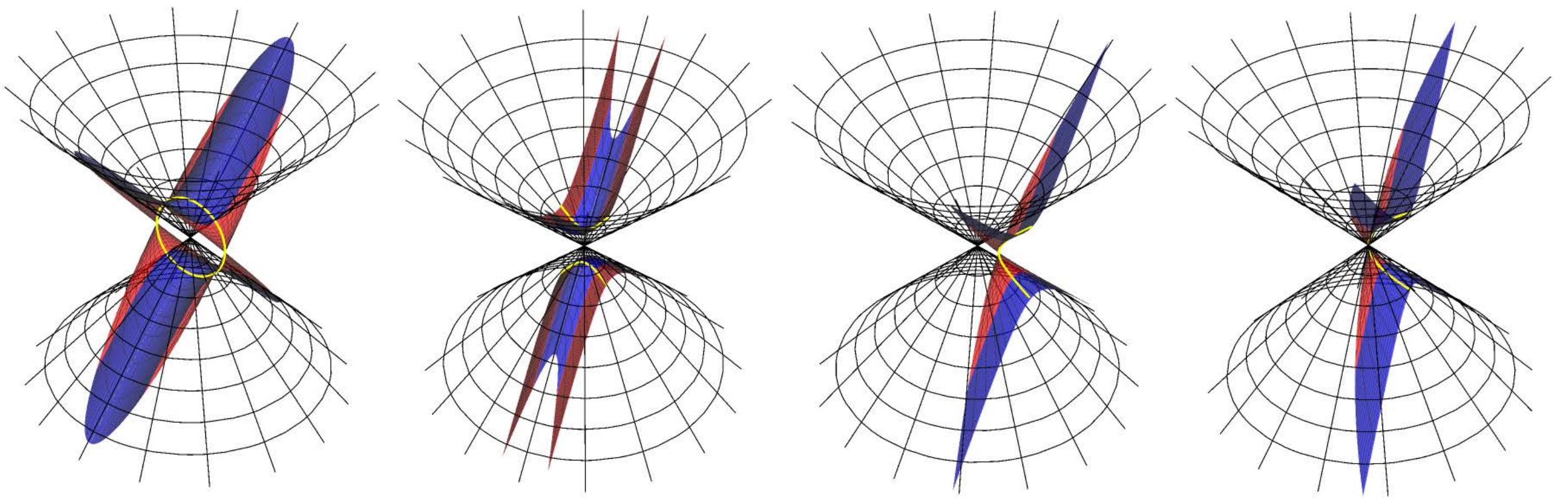}
 		\caption{Hyperbolic of first type quadric surfaces of revolution in $\L^3$ (in blue, the spacelike region; in red, the timelike region).}
 		\label{fig:HyperIquadrics}
 	\end{center}
 \end{figure}

	\item[\bf (II)] \textit{Hyperbolic of second type quadric surfaces of revolution:}
	\begin{enumerate}
		\item Lorentzian ellipsoid: generatrix ellipse $x^2/b^2+y^2/a^2=1$,
		$$\mathcal K (y)^2=\frac{b^2 y^2}{(b^2-a^2)y^2+a^4}, \quad
		k_{\rm m}= \frac{a^4}{b^2}k_{\rm p}^3.$$
		$$ {\rm If \ } a^2 > b^2, \ y^2 < \frac{a^4}{a^2-b^2};  {\rm \ if \ } a^2 \leq b^2, \ y \in (-\infty,+\infty) .$$
		\item Lorentzian one-sheet hyperboloid: generatrix hyperbola $-x^2/b^2+y^2/a^2=1$,
		$$\mathcal K (y)^2=\frac{b^2 y^2}{(a^2+b^2)y^2-a^4}, \, y^2 > \frac{a^4}{a^2+b^2} ,\quad
		k_{\rm m}= -\frac{a^4}{b^2}k_{\rm p}^3.$$ 
		\item Lorentzian two-sheets hyperboloid:  generatrix hyperbola $x^2/b^2-y^2/a^2=1$,
		$$\mathcal K (y)^2=\frac{b^2 y^2}{(a^2+b^2)y^2+a^4}, \, y \in (-\infty,+\infty) , \quad
		k_{\rm m}= \frac{a^4}{b^2}k_{\rm p}^3.$$ 
		\item Lorentzian paraboloid: generatrix parabola $y^2=2d\,x$,
		$$\mathcal K (y)^2=\frac{y^2}{y^2+d^2}, \, y \in (-\infty,+\infty) , \quad
		k_{\rm m}=d^2  k_{\rm p}^3.$$
	\end{enumerate}
 Recall that $a >0$, $b >0$, $d >0$.
See Figure \ref{fig:HyperIIquadrics}.
\begin{figure}[h]
	\begin{center}
		\includegraphics[width=14.5cm]{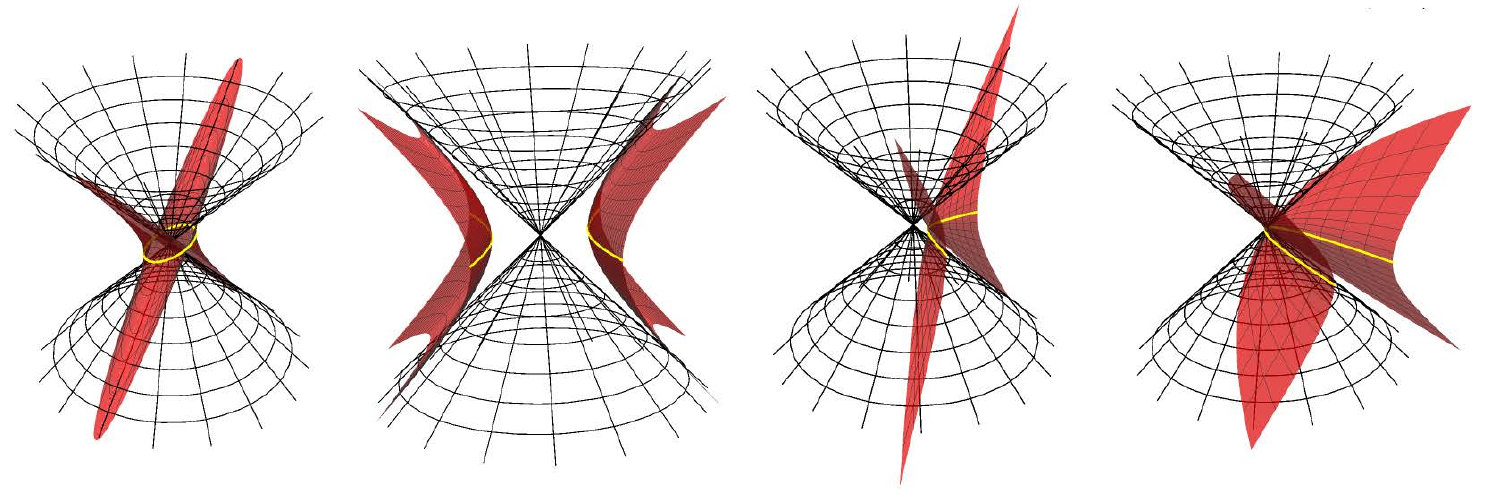}
		\caption{Hyperbolic of second type quadric surfaces of revolution in $\L^3$.}
		\label{fig:HyperIIquadrics}
	\end{center}
\end{figure}

	\item[\bf (III)] \textit{Elliptic quadric surfaces of revolution:} 
	\begin{enumerate}
		\item Lorentzian ellipsoid: generatrix ellipse $x^2/a^2+z^2/b^2=1$,
		$$\mathcal K (x)^2=\frac{\epsilon \, b^2 x^2}{a^4-(a^2+b^2)x^2}, \quad
		k_{\rm m}=\epsilon \, \frac{a^4}{b^2}k_{\rm p}^3.$$
		$$ {\rm Spacelike \ (resp.\ timelike)\ if \ } x^2 < \frac{a^4}{a^2+b^2} \,\left({\rm resp.\ if \  } x^2 > \frac{a^4}{a^2+b^2}\right).$$
		\item Lorentzian one-sheet hyperboloid: generatrix hyperbola $x^2/a^2-z^2/b^2=1$,
		$$\mathcal K (x)^2=\frac{\epsilon \, b^2 x^2}{(a^2-b^2)x^2-a^4}, \quad
		k_{\rm m}=-\epsilon \, \frac{a^4}{b^2}k_{\rm p}^3.$$
		$${\rm Spacelike \ (resp.\ timelike)\ if \ } x^2 > \frac{a^4}{a^2-b^2} \,\left({\rm resp.\ if \  } x^2 < \frac{a^4}{a^2-b^2}\right), \, a^2>b^2.$$
		When $a^2 \leq b^2$, it is always timelike.
		\item Lorentzian two-sheets hyperboloid: generatrix hyperbola $-x^2/a^2+z^2/b^2=1$,
		$$\mathcal K (x)^2=\frac{\epsilon \, b^2 x^2}{a^4+(a^2-b^2)x^2}, \quad
		k_{\rm m}=\epsilon \, \frac{a^4}{b^2}k_{\rm p}^3.$$
		$$ {\rm Spacelike \ (resp.\ timelike)\ if \ } x^2 < \frac{a^4}{b^2-a^2} \,\left({\rm resp.\ if \  } x^2 > \frac{a^4}{b^2-a^2}\right), \, a^2<b^2. $$
		When $a^2 \geq b^2$, it is always spacelike.
		\item Lorentzian paraboloid: generatrix parabola $x^2=2d\,z$,
		$$\mathcal K (x)^2=\frac{\epsilon \,  x^2}{d^2-x^2}, \quad
		k_{\rm m}=\epsilon \, d^2 k_{\rm p}^3.$$
		 $$ {\rm Spacelike \ (resp.\ timelike)\ if \ } |x| < d \,({\rm resp.\ if \  } |x|>d).$$
	\end{enumerate}
Recall $a >0$, $b >0$, $d >0$.
See Figure \ref{fig:Ellipticquadrics}.
\begin{figure}[h]
	\begin{center}
		\includegraphics[width=14.5cm]{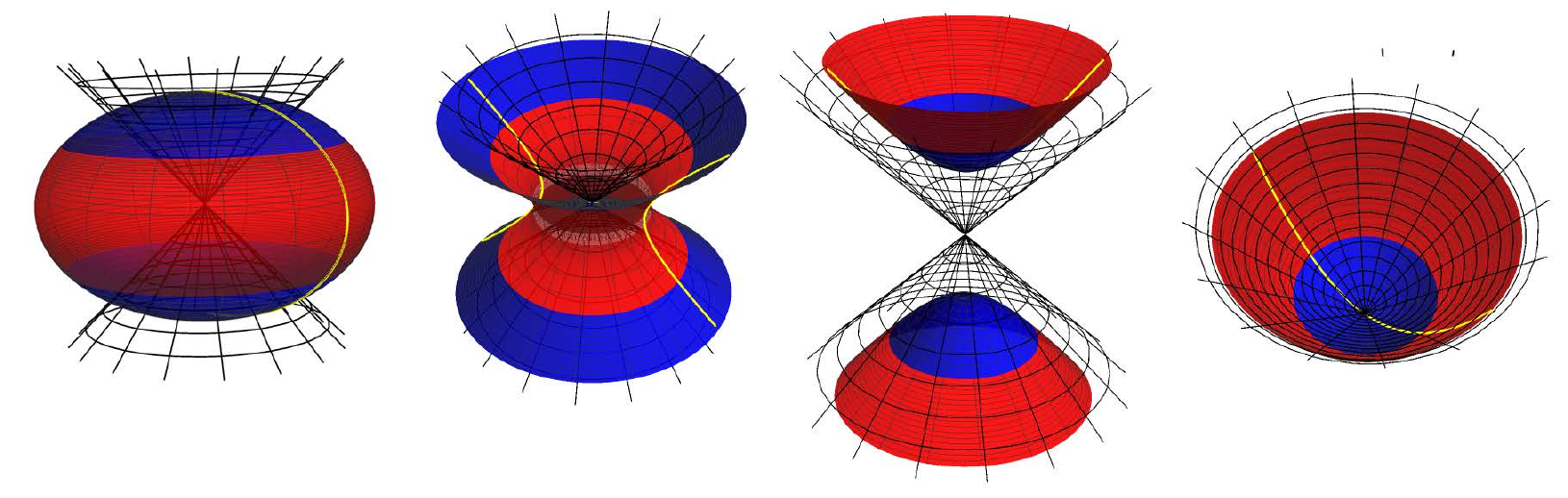}
		\caption{Elliptic quadric surfaces of revolution in $\L^3$ (in blue, the spacelike region; in red, the timelike region).}
		\label{fig:Ellipticquadrics}
	\end{center}
\end{figure}
   
	\item[\bf (IV)] \textit{Parabolic quadric surfaces of revolution:} 
	\begin{enumerate}
		\item  Generatrix hyperbola $u=\alpha^2/v-\beta^2 \, v$,
		
		 $\mathcal K (v)^2=\frac{-\epsilon \,  v^2}{\alpha^2 + \beta^2 v^2}$, therefore $\epsilon =-1$, that is, always timelike;
		$	k_{\rm m}= \alpha^2 k_{\rm p}^3$.
		\item  Generatrix hyperbola $u=\alpha^2/v+\beta^2 \, v$, 
		
		$\mathcal K (v)^2=\frac{\epsilon \,  v^2}{\beta^2 v^2 - \alpha^2}$;
		$	k_{\rm m}= - \epsilon \, \alpha^2 k_{\rm p}^3$.
		$$ {\rm Spacelike \ (resp.\ timelike)\ if \ } v^2 > \alpha^2/\beta^2 \,({\rm resp.\ if \  } v^2 < \alpha^2/\beta^2), \beta \neq 0.$$
		\item  Generatrix hyperbola $u=-\alpha^2/v+\beta^2 \, v$,
		
		$\mathcal K (v)^2=\frac{\epsilon \,  v^2}{\alpha^2 + \beta^2 v^2}$, therefore $\epsilon =1$, that is, always spacelike;
		$	k_{\rm m}=  \alpha^2 k_{\rm p}^3$.
		\item Generatrix hyperbola $u=-\alpha^2/v-\beta^2 \, v$,
		
		  $\mathcal K (v)^2=\frac{\epsilon \,  v^2}{\alpha^2-\beta^2 v^2}$;
		$	k_{\rm m}= \epsilon \, \alpha^2 k_{\rm p}^3$.
			$$ {\rm Spacelike \ (resp.\ timelike)\ if \ } v^2 < \alpha^2/\beta^2 \,({\rm resp.\ if \  } v^2 > \alpha^2/\beta^2), \beta \neq 0.$$
	\end{enumerate}
  % with $\alpha >0$, $\beta \geq 0$.
\end{enumerate}
Recall $\alpha >0$, $\beta >0$.
See Figure \ref{fig:Parabolicquadrics}.
\begin{figure}[h]
	\begin{center}
		\includegraphics[width=14.5cm]{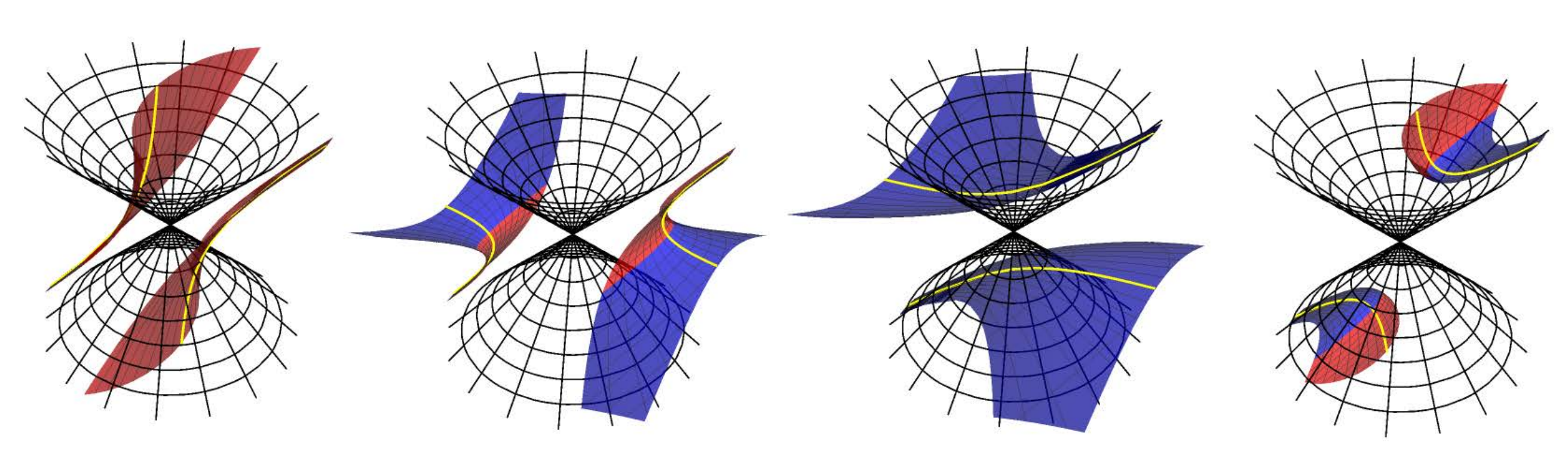}
		\caption{Parabolic quadric surfaces of revolution in $\L^3$ (in blue, the spacelike region; in red, the timelike region).}
		\label{fig:Parabolicquadrics}
	\end{center}
\end{figure}

\subsection{Classification}\label{SectClassification}

We are now in a position to establish our main result in this section characterizing all the quadric surfaces of revolution introduced in Section \ref{SectQuadric} in terms of a specific cubic Weingarten relation.

\begin{theorem}
	\label{Th:W-cubic}
	The only rotational surfaces in $\L^3$ satisfying $k_{\text m}=\mu\,  k_{\text p}^3$, $\mu \neq 0$, are the planes and the non-degenerate quadric surfaces of revolution described in Section \ref{SectQuadric}.
\end{theorem}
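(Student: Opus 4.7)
The plan is to apply Theorem~\ref{Th:kmkp} to translate the cubic Weingarten condition into a first-order separable ODE for the geometric linear momentum, solve it explicitly, and then match the resulting family of momenta with those already computed for the non-degenerate quadric surfaces of revolution in Section~\ref{SectQuadric}, invoking the uniqueness provided by Theorem~\ref{Th:Kdetermine}.

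First, by \eqref{eq:kmkp}, the relation $k_{\rm m}=\mu\,k_{\rm p}^3$ is equivalent to the separable ODE
\begin{equation*}
\mathcal{K}'(r)=\mu\,\frac{\mathcal{K}(r)^3}{r^3}.
\end{equation*}
The trivial solution $\mathcal{K}\equiv 0$ yields, by Example~\ref{ex:K0}, the spacelike and timelike planes. For any non-constant solution, separation of variables produces $1/\mathcal{K}^2 = \mu/r^2 + \lambda$ for some constant $\lambda\in\R$, that is,
\begin{equation*}
\mathcal{K}(r)^2=\frac{r^2}{\mu+\lambda r^2},
\end{equation*}
defined on the maximal interval where $\mu+\lambda r^2>0$. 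The limiting case $\lambda=0$ recovers the umbilical momentum $\mathcal{K}(r)^2=r^2/\mu$ of Example~\ref{ex:K=r/R}, which by Remark~\ref{rm:umb_quadric} is already listed among the degenerate quadric surfaces of revolution (with $\beta=0$ in the parabolic case, $a=b$ in the others).

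Second, I would compare the two-parameter family $r^2/(\mu+\lambda r^2)$ with the explicit formulas for $\mathcal{K}(r)^2$ tabulated quadric-by-quadric in Section~\ref{SectQuadric}. Every such formula has the shape $(\text{const})\cdot r^2/(\text{quadratic in } r^2)$ and therefore fits the above template after identifying $\mu$ with the coefficient $\pm a^4/b^2$, $\pm d^2$, or $\pm\alpha^2$ dictated by the cubic Weingarten relation satisfied by the quadric, and $\lambda$ with the remaining shape coefficient. Carrying out this identification case by case according to the causal character of the axis of revolution, the sign of $\mu$ selects the causal sign $\epsilon$ through the overall sign $\epsilon=\pm 1$ in the tabulated $\mathcal{K}^2$, while the sign of $\lambda$ (and of $\mu\lambda$) distinguishes between ellipsoidal, one-sheet hyperboloidal, two-sheet hyperboloidal, and paraboloidal type (the paraboloid arising as the limit in which the coefficient of $r^2$ in $\mu+\lambda r^2$ comes purely from $\lambda$). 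Once the matching is made, Theorem~\ref{Th:Kdetermine} forces the surface to coincide, up to translations along the axis, with the corresponding quadric.

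The main obstacle is not conceptual but rather the meticulous parametric bookkeeping: one must verify that the whole region of $(\mu,\lambda)\in(\R\setminus\{0\})\times\R$ compatible with $\mathcal{K}^2>0$ is exactly exhausted by the union of the quadric sub-families (I)(a)--(d), (II)(a)--(d), (III)(a)--(d) and (IV)(a)--(d) of Section~\ref{SectQuadric}, with no omissions and with only the coincidences recorded in Remark~\ref{rm:coincidence}. The parabolic case (IV) is the most delicate, as it splits into four sub-families distinguished by the signs of the $\alpha^2$ and $\beta^2$ contributions, and moreover the spacelike/timelike partition of each quadric (already pinned down in Section~\ref{SectQuadric}) must be matched consistently with the domain of positivity of $\mu+\lambda r^2$. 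Once this case analysis is completed, combining Corollary~\ref{cor:Param} with Theorem~\ref{Th:Kdetermine} concludes the classification.
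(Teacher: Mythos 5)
Your proposal is correct and follows essentially the same route as the paper: translate $k_{\rm m}=\mu\,k_{\rm p}^3$ via Theorem \ref{Th:kmkp} into $\mathcal K'=\mu\,\mathcal K^3/r^3$, obtain $\mathcal K(r)^2=r^2/(\mu+c\,r^2)$ (the paper's \eqref{eq:K cubic}), dispose of $\mathcal K\equiv 0$ and $c=0$ as planes and umbilical surfaces, and then match the remaining two-parameter family against the momenta tabulated in Section \ref{SectQuadric} using the uniqueness of Theorem \ref{Th:Kdetermine}. The only difference is that you leave the sign-by-sign bookkeeping (the paper's exhaustive discussion of $\epsilon\mu$ and $\epsilon c$ in cases (I)--(IV), including the use of the positivity constraints from Corollary \ref{cor:Param} to rule out empty subcases) as a sketch, whereas the paper carries it out in full; your outline of that analysis is accurate and would complete the proof as written.
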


\begin{remark}\label{rm:mu0cones}
	We remark that if we take $\mu =0$ in the above cubic Weingarten relation, we also recover the Lorentzian rotational cones (see Example \ref{ex:Kcte}) and the right cylinders (see Remark \ref{Re:Cylinders}) of $\L^3$.
\end{remark}

\begin{proof}
	On the one hand, the spacelike and timelike planes obviously satisfy $k_{\text m}=\mu\,  k_{\text p}^3$, for any $\mu \neq 0$. In addition, we have checked in Section \ref{SectQuadric} that all the non-degenerate quadric surfaces of revolution satisfy $k_{\text m}=\mu\,  k_{\text p}^3$, for suitable $\mu \neq 0$ depending on the positive parameters $a$, $b$, $d$, $\alpha$ and $\epsilon =\pm 1$. 
	
	On the other hand, using Corollary \ref{cor:edoW},
	the~cubic Weingarten relation $k_{\text m}=\mu\,  k_{\text p}^3$ translates into the separable~o.d.e.
	$$\mathcal K ' (r)=\mu\, \mathcal K(r)^3/r^3. $$
	
	Its constant solution $\mathcal K \equiv 0$ leads to the spacelike and timelike planes (see Example \ref{ex:K0}).
	Its non-constant solution is given by
	\begin{equation}\label{eq:K cubic}
		\mathcal{K}(r)=  \frac{r}{\sqrt{\mu + c\,r^2}}  , \, c\in\mathbb R, \, \mu + c\,r^2 > 0.
	\end{equation}
	
	We first observe that if $c=0$, we have that $\mathcal{K}(r)= r/ \sqrt \mu$, $\mu >0$, and taking into account Example \ref{ex:K=r/R}, we arrive at the hyperbolic plane or de Siiter 2-space of radius $R=\sqrt \mu$ (see Remark \ref{rm:umb_quadric}). 
	
	Using Theorem \ref{Th:Kdetermine}, we are going to identify the rotational surfaces $S_\gamma^l$ uniquely determined, up~to $l$-translations, by~the one parameter family of geometric linear momenta (depending on $ c$) given in \eqref{eq:K cubic}; see also  Corollary \ref{cor:Param}. 
	For that purpose, we analyze the different classes of rotational surfaces in $\L^3$ and we also distinguish cases according to the signs of $\epsilon  \mu$ and $\epsilon c$.
	
	\begin{enumerate}
		
		\item[\bf (I)] \textit{Hyperbolic of first type rotational surfaces:} Now \eqref{eq:K cubic} is
		\begin{equation}\label{eq:KI}
			\mathcal{K}(z)=  \frac{z}{\sqrt{\mu + c\,z^2}}  , \, c\in\mathbb R, \, \mu + c\,z^2 > 0.
		\end{equation}
%		Using \eqref{eq:xez} we get that the generatrix curve is given by  $(1-\epsilon c)^2 x^2=(1-\epsilon c)z^2-\epsilon \mu $.
	\begin{itemize}
		\item If $\epsilon \mu <0$, we separate in turn three~possibilities:
		\begin{itemize}
			\item[(i)] $\epsilon c>1$: Then $a^2=\frac{-\epsilon \mu}{\epsilon c-1}$ is well defined, and putting $b^2=\frac{a^4}{-\epsilon \mu}$ we conclude that \eqref{eq:KI} exactly gives the geometric linear momentum corresponding to case (I)-(a).
			\item[(ii)] $\epsilon c<1$: We now define $a^2=\frac{-\epsilon \mu}{1-\epsilon c}$ and $b^2=\frac{a^4}{-\epsilon \mu}$. Then we obtain that \eqref{eq:KI} leads to the geometric linear momentum corresponding to case (I)-(c).
			\item[(iii)] $\epsilon c=1$: We define $d^2=-\epsilon \mu$ and conclude that \eqref{eq:KI} provides the geometric linear momentum corresponding to case (I)-(d).
		\end{itemize}
	    \item If $\epsilon \mu >0$, using that $\mathcal K (z)^2 -\epsilon >0$ (see Corollary \ref{cor:Param}(1)), we easily deduce $\epsilon c<1$. Then  $a^2=\frac{\epsilon \mu}{1-\epsilon c}$ is well defined and putting $b^2=\frac{a^4}{\epsilon \mu}$, we finish that \eqref{eq:KI} exactly gives the geometric linear momentum corresponding to case (I)-(b).
	\end{itemize}
	
	\item[\bf (II)] \textit{Hyperbolic of second type rotational surfaces:} Now \eqref{eq:K cubic} is
	 	\begin{equation}\label{eq:KII}
	\mathcal{K}(y)=  \frac{y}{\sqrt{\mu + c\,y^2}}  , \, c\in\mathbb R, \, \mu + c\,y^2 > 0.
\end{equation}
%	Using \eqref{eq:xey} we get that the generatrix curve is given by  $(1- c)^2 x^2=(c-1)y^2+ \mu $.
	\begin{itemize}
		\item If $\mu >0$, we discuss three~possibilities:
		\begin{itemize}
			\item[(i)] $ c<1$:  Then $a^2=\frac{\mu}{1-c}$ is well defined, and putting $b^2=\frac{a^4}{\mu}$, we conclude that \eqref{eq:KII} exactly gives the geometric linear momentum corresponding to case (II)-(a).
			\item[(ii)] $ c>1$: We now define $a^2=\frac{\mu}{c-1}$ and $b^2=\frac{a^4}{\mu}$. Then we obtain that \eqref{eq:KII} leads to the geometric linear momentum corresponding to case (II)-(c).
			\item[(iii)] $ c=1$: We define $d^2= \mu$ and conclude that \eqref{eq:KII} provides the geometric linear momentum corresponding to case (II)-(d).
		\end{itemize}
		\item If $\mu <0$, using that $1-\mathcal K (y)^2  >0$ (see Corollary \ref{cor:Param}(2)), we easily deduce $c>1$. Then $a^2=\frac{-\mu}{c-1}$ is well defined and putting $b^2=\frac{a^4}{-\mu}$, we finish that \eqref{eq:KII} exactly gives the geometric linear momentum corresponding to case (II)-(b).
	\end{itemize}

	\item[\bf (III)] \textit{Elliptic rotational surfaces:}   Now \eqref{eq:K cubic} is
	\begin{equation}\label{eq:KIII}
	\mathcal{K}(x)=  \frac{x}{\sqrt{\mu + c\,x^2}}  , \, c\in\mathbb R, \, \mu + c\,x^2 > 0.
\end{equation}
%	Using \eqref{eq:zex} we get that the generatrix curve is given by  $(1+\epsilon c)^2 z^2=(1+\epsilon c)x^2+\epsilon \mu $.
	\begin{itemize}
		\item If $\epsilon \mu >0$, we separate in turn three~possibilities:
		\begin{itemize}
			\item[(i)] $\epsilon c<-1$: Then $a^2=\frac{\epsilon \mu}{-(1+\epsilon c)}$ is well defined, and putting $b^2=\frac{a^4}{\epsilon \mu}$,  we conclude that \eqref{eq:KIII} exactly gives the geometric linear momentum corresponding to case (III)-(a).
			\item[(ii)] $\epsilon c>-1$: We now define $a^2=\frac{\epsilon \mu}{1+\epsilon c}$ and $b^2=\frac{a^4}{\epsilon \mu}$. Then we obtain that \eqref{eq:KIII} leads to the geometric linear momentum corresponding to case (III)-(c).
			\item[(iii)] $\epsilon c=-1$: We define $d^2=\epsilon \mu$ and conclude that \eqref{eq:KIII} provides the geometric linear momentum corresponding to case (III)-(d).
		\end{itemize}
		\item If $\epsilon \mu <0$, using that $\mathcal K (x)^2 +\epsilon >0$ (see Corollary \ref{cor:Param}(3)), we easily deduce $\epsilon c>-1$. Then $a^2=\frac{-\epsilon \mu}{1+\epsilon c}$ is well defined and putting $b^2=\frac{a^4}{-\epsilon \mu}$ we finish that \eqref{eq:KIII} exactly gives the geometric linear momentum corresponding to case (III)-(b).
	\end{itemize}

	\item[\bf (IV)] \textit{Parabolic rotational surfaces:}  Now \eqref{eq:K cubic} is
		\begin{equation}\label{eq:KIV}
	\mathcal{K}(v)=  \frac{v}{\sqrt{\mu + c\,v^2}}  , \, c\in\mathbb R, \, \mu + c\,v^2 > 0.
\end{equation}
%	Using \eqref{eq:uev} we get that the generatrix curve is given by  $u=\epsilon (c\, v^2-\mu)/v$.
We distinguish two cases and two subcases at each case:
	\begin{itemize}
	\item $\epsilon \mu <0$: 
	\begin{itemize}
		\item[(i)] $\epsilon c <0$: Then $\alpha^2=-\epsilon \mu$ and  $\beta^2 = -\epsilon c$ are well defined and
		we conclude that \eqref{eq:KIV} exactly gives the geometric linear momentum corresponding to case (IV)-(a).		
Recall that necessarily $\epsilon =-1$ in this case.
		\item[(ii)] $\epsilon c>0$: Now $\alpha^2=-\epsilon \mu$ and $\beta^2 = \epsilon c$ are well defined and
		we deduce that \eqref{eq:KIV} exactly gives the geometric linear momentum corresponding to case (IV)-(b).
	\end{itemize}
	\item $\epsilon \mu >0$:
	\begin{itemize}
		\item[(i)] $\epsilon c >0$: Then $\alpha^2=\epsilon \mu$ and  $\beta^2 = \epsilon c$ are well defined and
		we get that \eqref{eq:KIV} exactly gives the geometric linear momentum corresponding to case (IV)-(c).
		Recall that necessarily $\epsilon =1$ in this case.
		\item[(ii)] $\epsilon c<0$: Now $\alpha^2=\epsilon \mu$ and  $\beta^2 = -\epsilon c$ are well defined and
		we finish that \eqref{eq:KIV} exactly gives the geometric linear momentum corresponding to case (IV)-(d).
	\end{itemize}
   \end{itemize}

\end{enumerate}
This finishes the proof.
\end{proof}

\medskip

\noindent\textbf{Acknowledgments.} 
I.\ Castro is partially supported by the State Research Agency (AEI) via the grant no.\ PID2022-142559NB-I00, and the “Maria de Maeztu” Unit of Excellence IMAG, reference CEX2020001105-M, funded by MICIU/AEI/10.13039/501100011033 and ERDF/EU, Spain. I.\ Castro-Infantes is partially supported by the grant PID2021-124157NB-I00 funded by MCIN/AEI/ 10.13039/501100011033/ ‘ERDF A way of making Europe’, Spain; and by Comunidad Aut\'{o}noma de la Regi\'{o}n de Murcia, Spain, within the framework of the Regional Programme in Promotion of the Scientific and Technical Research (Action Plan 2022), by Fundaci\'{o}n S\'{e}neca, Regional Agency of Science and Technology, REF, 21899/PI/22.

%%%%%%%%%%%%%%%%%%%%%%%%%%%%%%%%%%%%%%%%%%%%%%%%%%%%%%%%%%%%%%%%%%%%%%%%%%%%%%%%%%%%%%%%%%%%

%\newpage

\end{document}